\newtheorem{theorem}{Theorem}[section]
\newtheorem{lemma}{Lemma}[section]
\newtheorem{proposition}{Proposition}[section]
\newtheorem{example}{Example}[section]
\newtheorem{remark}{Remark}[section]
\newtheorem{definition}{Definition}[section]
\def\min{\operatorname{min}}
\DeclareMathOperator{\grad}{grad}
\def\min{\operatorname{Minimize}}
\def\const{\operatorname{subject~to~}}
\title{Enlargement of Monotone Vector Fields and an Inexact Proximal  Point Method for Variational Inequalities  in Hadamard Manifolds.}
\author{Batista,  E. E. A.\thanks{IME, Universidade Federal do Oeste da Bahia,
Barreiras, BA 47808-021, BR ({\tt edvaldo.batista@ufob.edu.br}).}
\and  Bento, G. C. \thanks{IME, Universidade Federal de Goi\'as,
Goi\^ania, GO 74001-970, BR ({\tt glaydston@ufg.br}).}
\and
 Ferreira, O. P.
\thanks{IME, Universidade Federal de Goi\'as,
Goi\^ania, GO 74001-970, BR({\tt orizon@ufg.br}).}
}
\begin{document}
\maketitle
\begin{abstract}

In this paper  an inexact proximal  point method for variational inequalities  in  Hadamard manifolds is introduced and studied its convergence properties. The main tool  used for presenting the method is the concept of  enlargement of monotone vector fields, which generalizes   the  concept of enlargement of monotone operators from the linear setting  to the  Riemannian context.   As an application,  an inexact proximal point method  for  constrained optimization problems  is obtained.\\

\noindent
{\bf Keywords:} Inexact proximal; Hadamard manifolds; enlargement of monotone vector fields; constrained optimization.

\end{abstract}
\section{Introduction}
In the last few  years,  there has been increasing the number of papers dealing with the subject of the extension of concepts and techniques,  as well as methods of mathematical programming,    from  the  linear setting   to  the Riemannian context;  papers published in the last three years about  this issues include, for example,    \cite{Absil2014,  AhmadiKhatibzadeh2014, AABCP2014-3, Bacak2013,  BentoCruzNeto2013, BCNS2013,   BFO2012,  BentoFerreira2015, BonnelUd2015,  HosseiniPouryayevali2013,  Manton2015,  PapaQuirozOliveira2012, WangLi2015, WangLiLopezYao2015, WangLiYao2015,    WenWotao2013}. Is well known that  convexity and monotonicity  plays an important role in the analysis and development of methods of mathematical programming. Hence,  one of the reasons for this  extension is the possibility to transform non-convex or non-monotone problems in Euclidean context into  Riemannian convex or monotone  problems, by introducing a suitable metric, which  allow modify  numerical  methods to find  solutions of these problems;  see \cite{ BentoFerreira2015, BentoMelo2012, CLMM2012, FerreiraCPN2006, Rapcsak1997}.  These extensions,  which in general are nontrivial,   are either of purely theoretical nature or aims at obtaining  numerical  algorithms.  Indeed, many mathematical programming problems are naturally posed on Riemannian manifolds having  specific underlying geometric and algebraic structure that could be also exploited to reduce the cost of obtaining the solutions; see, e.g., \cite{ Absil2014,  AdlerDedieuShub2002, EdelmanAriasSmith1999, Karmarkar1998,  Manton2015, MillerMalick2005,  NesterovTodd2002,  Smith1994, WenWotao2013}.

In this paper, we consider the problem of  finding a  solution of a variational inequality  problem  defined on a Riemannian manifold. Variational inequality problems on Riemannian manifolds were first introduced and studied  by  N\'emeth in \cite{Nemeth2003} for univalued vector fields on Hadamard manifolds  and  for multvalued  vector fields on general Riemannian manifolds  by Li and Yao in   \cite{LiYao2012};  for recent works addressing this subject  see \cite{FangChen2015, LiChongLiouYao2009, TangWangLiu2015, TangZhouHuang2013}. It is worth to point out that  constrained optimization problems and the problem of finding the zero of a  multivalued vector field,   studied  in \cite{AhmadiKhatibzadeh2014, BentoFerreira2015,    daCruzFerreiraPerez2006,  FerreiraOliveira2002, LiLopesMartin-Marquez2009, WangLiLopezYao2015},  are  particular instances of the variational inequality  problem.

The aim  of this   paper is to  present an inexact proximal  point method for variational inequalities  in Hadamard manifolds and to study its convergence properties. As an application, we obtain an inexact proximal  point method for constrained optimization problem in Hadamard manifolds.  In order to present our method, we first generalize the concept of enlargement of monotone operators,  introduced  by \cite{BurachikIusemSvaiter1997}, from linear setting to the Riemannian context; see also \cite{BurachikIusem1998}. It is worth mentioning that the concept of enlargement of monotone operators  in linear spaces has been successfully employed for wide range of purpose; see \cite{BurachikIusem2008} and its reference therein. As far as we know, this is the first time that the  inexact proximal  point method for variational inequalities is studied in the Riemannian setting.   Finally, we also  mention that  the method  introduced   has two  important particular instances, namely,   the methods  (5.1) of \cite{LiYao2012} and  (4.3) of \cite{LiLopesMartin-Marquez2009}.

The organization of the paper is as follows. In Section~\ref{sec:aux},  some notations and  basic results used in the paper are presented. In Section~\ref{sec2}, the  concept of enlargement of monotone vector fields is introduced and  some properties are obtained. In Section \ref{sec3} the inexact proximal point method for variational inequalities is presented  and studied its convergence properties. As an application, in Section~\ref{sec4} an inexact proximal point method  for  constrained optimization problems  is obtained.  Some final remarks are made in Section~\ref{secfr}.
\subsection{Notation and Terminology} \label{sec:aux}
In this section, we introduce some fundamental  properties and notations about Riemannian geometry. These basics facts can be found in any introductory book on Riemannian geometry, such as in \cite{doCarmo1992} and \cite{Sakai1996}.

Let $M$ be a $n$-dimentional Hadamard  manifold. {\it In this paper, all manifolds $M$ are assumed to be Hadamard finite dimensional}. We denote by $T_pM$ the $n$-dimentional {\it tangent space} of $M$ at $p$, by $TM=\cup_{p\in M}T_pM$ {\itshape{tangent bundle}} of $M$ and by ${\cal X}(M)$ the space of smooth vector fields on $M$. The Riemannian metric is denoted by  $\langle \,,\, \rangle$ and  the corresponding norm  by $\| \; \|$. Denote the lenght of piecewise smooth curves $\gamma:[a,b]\rightarrow M$ joining $p$ to $q$, i.e., such that $\gamma(a)=p$ and $\gamma(b)=q$, by
\[
l(\gamma)=\int_a^b\|\gamma^{\prime}(t)\|dt,
\]
and the Riemannian  distance by $d(p,q)$,  which induces the original topology on $M$, namely,  $( M, d)$ is a complete metric space and bounded and closed subsets are compact.  For $A\subset M$,  the notation  $ \mbox{int} (A)$ means the interior of the set $A$, and if $A$ is a nonempty set,  the distance  from $p\in M$ to $A$ is given by $d(p,A):= \inf \{d(p,q)~:~ q\in A\}.$ The metric induces a map $f\mapsto\grad f\in{\cal X}(M)$ which associates to each function smooth over $M$ its gradient via the rule $\langle\grad f,X\rangle=d f(X),\ X\in{\cal X}(M)$.   Let $\nabla$ be the Levi-Civita connection associated to $(M,{\langle} \,,\, {\rangle})$. A vector field $V$ along $\gamma$ is said to be {\it parallel} if $\nabla_{\gamma^{\prime}} V=0$. If $\gamma^{\prime}$ itself is parallel we say that $\gamma$ is a {\it geodesic}. Given that geodesic equation $\nabla_{\ \gamma^{\prime}} \gamma^{\prime}=0$ is a second order nonlinear ordinary differential equation, then geodesic $\gamma=\gamma _{v}(.,p)$ is determined by its position $p$ and velocity $v$ at $p$. It is easy to check that $\|\gamma ^{\prime}\|$ is constant. We say that $ \gamma $ is {\it normalized} if $\| \gamma ^{\prime}\|=1$. The restriction of a geodesic to a  closed bounded interval is called a {\it geodesic segment}. Since $M$ is a Hadamard manifolds the lenght   of the  geodesic segment  $\gamma$  joining $p$ to $q$ its equals $d(p,q)$, the parallel transport along $\gamma$ from $p$ to $q$ is denoted by $P_{pq}:T_{p}M\to T_{q}M$. Moreover, {\it exponential map} $exp_{p}:T_{p}  M \to M $ is defined by $exp_{p}v\,=\, \gamma _{v}(1,p)$ is  a diffeomorphism and, consequently, $M$ is diffeomorphic to the Euclidean space $\mathbb{R}^n $, $ n=dim M $.  Let ${q}\in M $ and $exp^{-1}_{q}:M\to T_{p}M$ be the inverse of the exponential map. Note that $d({q}\, , \, p)\,=\,||exp^{-1}_{p}q||$,  the map $d_{q}^2: M\to\mathbb{R}$ defined by $ d_{q}^2(p)=d(q,p)$ is  $C^{\infty}$ and
\begin{equation} \label{eq:gd2}
\grad d_{q}^2(p):=-2exp^{-1}_{p}{q}.
\end{equation}
 Furthermore,  we know that
\begin{equation} \label{eq:coslaw}
d^2(p_1,p_3)+d^2(p_3,p_2)-2\langle \exp_{p_3}^{-1}p_1,\exp_{p_3}^{-1}p_2\rangle\leq d^2(p_1,p_2),  \qquad p_1, p_2 , p_3 \in M.
\end{equation}
\begin{equation} \label{eq:coslaw2}
\langle \exp^{-1}_{p_2}p_1, \, exp^{-1}_{p_2}p_3\rangle+\langle \exp^{-1}_{p_3}p_1, \, exp^{-1}_{p_3}p_2\rangle \geq d^2(p_2,p_3),  \ \qquad p_1, p_2 , p_3 \in M.
\end{equation}
A set,  $\Omega\subseteq M$ is said to be {\it convex}  if any geodesic segment with end points in $\Omega$ is contained in
$\Omega$, that is,  if $\gamma:[ a,b ]\to M$ is a geodesic such that $x =\gamma(a)\in \Omega$ and $y =\gamma(b)\in \Omega$; then $\gamma((1-t)a + tb)\in \Omega$ for all $t\in [0,1]$.   Given an arbitrary set,  $\mathcal{B} \subset M$,   the minimal convex subset that contains  $\mathcal{B}$  is called  the {\it convex hull} of $\mathcal{B}$ and is  denoted by $ \mbox{conv}(\mathcal{B})$; see \cite{CLMM2012}.  Let $\Omega\subset\mathbb{R}^n$ be a convex set, and $p\in \Omega$. Following \cite{LiLopesMartin-Marquez2009}, we define the {\it normal cone} to $\Omega$ at $p$ by
\begin{equation} \label{eq:nc}
N_{\Omega}(p):=\left\{w\in T_pM~:~\langle w, \exp_{p}^{-1}q \rangle\leq 0, q\in \Omega \right\}.
\end{equation}
Let $f:M\to\mathbb{R}\cup\{+\infty\}$  be a function. The {\it domain} of $f$ is the set defined by
\begin{eqnarray*}
\mbox{dom}f:=\left\{ p\in M ~: ~f(p)<\infty \right\}.
\end{eqnarray*}
The function $f$ is said to be proper if $\mbox{dom}~f\neq \varnothing$ and  {\it convex} on a convex set $\Omega\subset \mbox{dom}~f$ if for any geodesic segment $\gamma:[a, b]\to\Omega$ the composition $f\circ\gamma:[a, b]\to\mathbb{R}$ is convex.  Is very known that  $d_{q}^2$ is convex.  Take $p\in \mbox{dom}~f$. A vector $s \in T_pM$ is said to be a {\it subgradient\/} of $f$ at $p$, if
\[
f(q) \geq f(p) + \langle s, \, \exp^{-1}_pq\rangle,  \qquad  q\in M.
\]
The set  $\partial f(p)$ of all subgradients of $f$ at $p$ is called the {\it subdifferential\/} of $f$ at $p$. The function  $f$ is {\it lower semicontinuous} at $\bar{p}\in\mbox{dom}f$ if for each sequence $\{p^k\}$ converging to $\bar{p}$ we have
\begin{eqnarray*} \liminf_{k\rightarrow\infty} f(p^k)\geq f(\bar{p}).
\end{eqnarray*}
 Given a multivalued vector field  $X: M \rightrightarrows  TM$,   the  domain of $X$  is the set defined by
\begin{equation} \label{eq:dr}
\mbox{dom}X:=\left\{ p\in M ~: ~X(p)\neq \varnothing \right\},
\end{equation}
Let  $X: M \rightrightarrows  TM$ be a vector field and $\Omega\subset M$. We define the following quantity
 $$
 m_X(\Omega):=\sup_{p\in\Omega}\left\{\|u\|~:~u\in X(p)\right\}.
 $$
We say that  $X$  is {\it locally bounded} if,  for all $p \in  \mbox{int}( \mbox{dom}X) $,   there exist an open  set $U\subset M$  such  that   $p\in U$ and  there holds $m_X(U) < +\infty,$ and {\it bounded on bounded sets }  if for all bounded set $V\subset M$  such that its closure $\overline{V} \subset  \mbox{int}( \mbox{dom}X)$ it holds that   $m_X(V) < +\infty$.  The multivalued vector field   $X$ is said to be {\it upper semicontinuous} at $p\in\mbox{dom}X $  if, for any open set  $V\subset T_pM$ such that $X(p) \in V$, there exists an open set  $U \subset M$ with $p\in U$ such that $P_{qp}X(q)\subset V$, for any $q\in U$. For two  multivalued vector fields  $X, Y$ on $M$, the notation $ X\subset Y$ means $X(p)\subset Y(p)$, for all $p\in M$.

A sequence $\{p^k\} \subset (M,d)$ is said  to be  {\it  quasi-Fej\'er convergent} to a nonempty set $W\subset M$  if, for every $q \in W$ there exists a sommable sequence $\{\epsilon_k\}\subset\mathbb{\mathbb{R}_{++}}$,  such that $d^2(q, p^{k+1})\leq d^2(q, p^k) + \epsilon_k$, for $k=0,1, \ldots$. 

We end this section with a result, which its proof is analogous to the proof of Theorem 1 in Burachik et al. \cite{BurachikDIS1995}, by replacing the Euclidean distance by the Riemannian distance.
\begin{proposition}\label{fejer}
Let $\{p^k\}$ be a sequence in $(M,d)$. If $\{p^k\}$ is quasi-Fej\'er convergent to non-empty set $W\subset M$, then $\{p^k\}$ is bounded. If furthermore, an accumulation point $p$ of $\{p^k\}$ belongs to $W$, then $\lim_{k\rightarrow \infty}p^k=p$.  
\end{proposition}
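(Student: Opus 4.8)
The plan is to follow the classical argument of Burachik, Iusem and Svaiter, simply carrying it out with the Riemannian distance $d$ in place of the Euclidean one; the only property of $M$ that is really used is that $(M,d)$ is a complete metric space in which bounded and closed subsets are compact, as recorded in Section~\ref{sec:aux}.

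First I would prove boundedness. Fix any $q\in W$ and let $\{\epsilon_k\}$ be the summable sequence supplied by the definition of quasi-Fej\'er convergence, so that $d^2(q,p^{k+1})\le d^2(q,p^k)+\epsilon_k$ for all $k$. A straightforward induction then yields $d^2(q,p^k)\le d^2(q,p^0)+\sum_{j=0}^{k-1}\epsilon_j\le d^2(q,p^0)+\sum_{j=0}^{\infty}\epsilon_j=:c<+\infty$, so the whole sequence $\{p^k\}$ is contained in the closed ball of radius $\sqrt{c}$ about $q$. Since that ball is bounded (in fact compact), $\{p^k\}$ is bounded.

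Next I would establish the convergence statement. Apply the quasi-Fej\'er hypothesis with the particular point $q=p\in W$, obtaining a summable $\{\epsilon_k\}$ with $d^2(p,p^{k+1})\le d^2(p,p^k)+\epsilon_k$. Put $a_k:=d^2(p,p^k)$ and $b_k:=a_k+\sum_{j=k}^{\infty}\epsilon_j$. Then $b_{k+1}=a_{k+1}+\sum_{j=k+1}^{\infty}\epsilon_j\le a_k+\epsilon_k+\sum_{j=k+1}^{\infty}\epsilon_j=b_k$, so $\{b_k\}$ is non-increasing and bounded below by $0$, hence convergent; since $\sum_{j=k}^{\infty}\epsilon_j\to 0$, the sequence $\{a_k\}$ also converges, say to $\ell\ge 0$. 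As $p$ is an accumulation point of $\{p^k\}$, there is a subsequence $\{p^{k_i}\}$ with $p^{k_i}\to p$, i.e. $a_{k_i}\to 0$; by uniqueness of the limit $\ell=0$, and therefore $d(p,p^k)\to 0$, that is $\lim_{k\to\infty}p^k=p$.

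I do not anticipate a genuine obstacle here; the argument is elementary. The one point requiring care is that the summable sequence $\{\epsilon_k\}$ in the definition of quasi-Fej\'er convergence is permitted to depend on the chosen point of $W$, so one must invoke the definition separately for the reference point used in each part (an arbitrary $q\in W$ for boundedness, and the accumulation point $p$ itself for convergence). Beyond this bookkeeping, the only ``geometric'' ingredient is the compactness of bounded closed subsets of the Hadamard manifold $M$, which is exactly what turns ``contained in a bounded ball'' into ``bounded sequence'' in the first part.
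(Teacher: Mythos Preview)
Your proof is correct and is precisely the approach the paper indicates: the paper does not write out a proof but simply says the argument is analogous to Theorem~1 in Burachik et al.\ \cite{BurachikDIS1995} with the Euclidean distance replaced by the Riemannian one, which is exactly what you have done. (One small remark: in your closing comment, compactness is not what makes a sequence contained in a ball ``bounded'' --- that is just the definition in a metric space --- so that sentence can be dropped.)
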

\section{Enlargement of Monotone Vector Fields} \label{sec2}
A multivalued vector field $X$    is said to be {\it monotone}   if
\begin{equation}\label{eq2.1}
\left\langle P_{qp}^{-1} u-v, \, \exp_{q}^{-1}p\right\rangle \geq 0, \qquad  \qquad  ~ p,\,q\in \mbox{dom}X,  \quad u\in X(p), ~ v\in X(q),
\end{equation}
and  {\it strongly monotone}, if there exists $\rho>0$ such that
\begin{equation}\label{eq2.2}
\left\langle P_{qp}^{-1} u-v, \,\exp_{q}^{-1}p\right\rangle \geq \rho d^2(p,q), \qquad \qquad  ~ p,\,q\in \mbox{dom}X,  \quad u\in X(p), ~ v\in X(q).
\end{equation}
Moreover,  a monotone vector field $X$  is said to be  {\it maximal monotone\/}, if for each $p\in \mbox{dom}X$ and $u\in T_pM$, there holds:
\begin{equation}\label{eq2.3}
\left\langle P_{qp}^{-1} u-v, \,\exp_{q}^{-1}p\right\rangle \geq 0, \quad \qquad ~ q\in \mbox{dom}X, \quad ~ v\in X(q)  ~ \Rightarrow ~  u\in X(p).
\end{equation}
\begin{theorem}\label{mmsub}
 Let $f$ be a proper, lower semicontinuous and convex function on $M$. The subdifferential $\partial f$ is a monotone multivalued vector field. Furthermore, if $\emph{dom}f=M$, then the subdifferential $\partial f$ of $f$ is a maximal monotone vector field.
\end{theorem}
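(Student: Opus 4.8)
The plan is to prove the two assertions separately, the monotonicity being routine and the maximality carrying the real content.

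\emph{Monotonicity.} Given $p,q\in\dom\partial f$, $u\in\partial f(p)$ and $v\in\partial f(q)$, I would add the two subgradient inequalities $f(q)\ge f(p)+\langle u,\exp_p^{-1}q\rangle$ and $f(p)\ge f(q)+\langle v,\exp_q^{-1}p\rangle$ to get $\langle u,\exp_p^{-1}q\rangle+\langle v,\exp_q^{-1}p\rangle\le 0$. The only Riemannian ingredient is the identity $\langle P_{qp}^{-1}u,\exp_q^{-1}p\rangle=-\langle u,\exp_p^{-1}q\rangle$: letting $\gamma\colon[0,1]\to M$ be the geodesic from $p$ to $q$, one has $\exp_p^{-1}q=\gamma'(0)$, $\exp_q^{-1}p=-\gamma'(1)$ and $P_{qp}^{-1}=P_{pq}$, so the identity follows because parallel transport is a linear isometry carrying $\gamma'(0)$ to $\gamma'(1)$. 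Substituting it, $\langle P_{qp}^{-1}u-v,\exp_q^{-1}p\rangle=-\langle u,\exp_p^{-1}q\rangle-\langle v,\exp_q^{-1}p\rangle\ge 0$, i.e.\ \eqref{eq2.1} holds.

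\emph{Maximality.} Assume now $\dom f=M$, so that $f$ is finite and (being convex and finite on a Hadamard manifold) continuous; I also use the standard fact (for finite convex functions on Hadamard manifolds; see, e.g., \cite{LiLopesMartin-Marquez2009}) that $\partial f(x)\ne\varnothing$ for every $x\in M$, whence $\dom\partial f=M$. Fix $p\in M$ and $u\in T_pM$ satisfying the premise of \eqref{eq2.3}; it suffices to prove $f(q)\ge f(p)+\langle u,\exp_p^{-1}q\rangle$ for an arbitrary fixed $q\in M$. Let $\gamma\colon[0,1]\to M$ be the geodesic from $p$ to $q$ and set $\phi(t):=f(\gamma(t))$, which is finite, convex, hence continuous on $[0,1]$ ($\gamma$ extends past $[0,1]$, so $\phi$ extends convexly and cannot jump at the endpoints). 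For $t\in(0,1)$ let $q_t:=\gamma(t)$ and choose $v_t\in\partial f(q_t)$. Using $\exp_{q_t}^{-1}p=-t\gamma'(t)$ and $\exp_{q_t}^{-1}q=(1-t)\gamma'(t)$, the premise of \eqref{eq2.3} at $q_t$ (after dividing by $-t<0$) gives $\langle P_{q_tp}^{-1}u,\gamma'(t)\rangle\le\langle v_t,\gamma'(t)\rangle$; since $P_{q_tp}^{-1}u=P_{pq_t}u$ and $\gamma'(t)=P_{pq_t}\gamma'(0)$, the left-hand side equals $\langle u,\gamma'(0)\rangle$. On the other hand, the subgradient inequality for $v_t$ at the point $q$ gives $(1-t)\langle v_t,\gamma'(t)\rangle\le f(q)-f(q_t)=\phi(1)-\phi(t)$. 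Chaining, $\langle u,\gamma'(0)\rangle\le(\phi(1)-\phi(t))/(1-t)$ for every $t\in(0,1)$; letting $t\to 0^+$ and using continuity of $\phi$ yields $\langle u,\gamma'(0)\rangle\le\phi(1)-\phi(0)=f(q)-f(p)$, that is $f(q)\ge f(p)+\langle u,\exp_p^{-1}q\rangle$. Since $q$ was arbitrary, $u\in\partial f(p)$, which is \eqref{eq2.3}.

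\emph{Main obstacle.} Monotonicity is immediate once the parallel-transport identity is in hand; the substance is in maximality. The decisive idea there is to approach $p$ \emph{along the geodesic toward the test point $q$}: the premise on $u$ forces $\langle u,\gamma'(0)\rangle\le\langle v_t,\gamma'(t)\rangle$, the subgradient inequality at $q_t$ forces $\langle v_t,\gamma'(t)\rangle\le(\phi(1)-\phi(t))/(1-t)$, and the two bounds close up on $f(q)-f(p)$ as $t\to 0^+$. The hypothesis $\dom f=M$ enters at exactly one point, namely to guarantee $\partial f(q_t)\ne\varnothing$ for $t$ near $0$, and this is the sole place an external result is needed. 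A Minty-type alternative is possible (show $f+\tfrac{1}{2\lambda}d_{p'}^2$ attains its minimum at some $q$, with $p'=\exp_p(\lambda u)$, read off the resolvent inclusion from \eqref{eq:gd2}, and deduce $q=p$ and $u\in\partial f(p)$ via \eqref{eq:coslaw}), but it requires the same existence input together with an extra comparison estimate, so I would keep the direct argument.
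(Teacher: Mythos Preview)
Your argument is correct. The paper, however, does not give its own proof of this theorem at all: it simply cites \cite[Theorem~5.1]{LiLopesMartin-Marquez2009}. So there is nothing to compare at the level of technique---you have supplied a self-contained proof where the paper defers to the literature.

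That said, a brief remark on what your direct route buys. The monotonicity half is the standard addition of the two subgradient inequalities, together with the parallel-transport identity you spell out. For maximality, your idea of testing the premise of \eqref{eq2.3} at points $q_t=\gamma(t)$ along the geodesic toward the target $q$, and coupling it with the subgradient inequality for $v_t\in\partial f(q_t)$ evaluated at $q$, is clean: it reduces the Riemannian statement to a one-dimensional convexity/continuity argument for $\phi=f\circ\gamma$ and needs only the single external fact that $\partial f(x)\ne\varnothing$ when $\dom f=M$ (which is exactly what the cited reference provides). The Minty-type alternative you sketch would also work but, as you note, requires in addition a comparison estimate such as \eqref{eq:coslaw}; your geodesic argument is the more economical of the two. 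One small cosmetic point: you invoke continuity of $f$ on $M$, but in fact you only use continuity of $\phi$ at $t=0$, which follows directly from the one-variable convexity of $\phi$ on an open interval containing $0$ (since the geodesic extends past $p$); so even that appeal can be localized.
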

\begin{proof} See   \cite[Theorem 5.1]{LiLopesMartin-Marquez2009}.
\end{proof}
\begin{lemma} \label{le:msvf}
 Let $X_1,X_2$ be a maximal monotone vector fields such that $\emph{dom}X_1=\emph{dom}X_2=M$. Then $X_1+X_2$  is a maximal monotone vector field.
\end{lemma}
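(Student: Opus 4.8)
The plan is to reduce the maximality of $X_1+X_2$ to two ingredients: the elementary fact that a sum of monotone vector fields is monotone, and the standard regularity of a maximal monotone vector field with full domain — nonempty compact convex values, local boundedness, and upper semicontinuity — which in the Hadamard setting is available from \cite{LiLopesMartin-Marquez2009} (see also \cite{WangLiLopezYao2015}). With those in hand I would verify the same properties for $X_1+X_2$ and then run the Riemannian analogue of the Euclidean argument ``monotone $+$ u.s.c. $+$ compact convex values $+$ full domain $\Rightarrow$ maximal monotone''.

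First the routine reductions. Since $\dom X_1=\dom X_2=M$ we have $\dom(X_1+X_2)=M$ and $(X_1+X_2)(p)=X_1(p)+X_2(p)\neq\varnothing$ for all $p$; adding the two instances of \eqref{eq2.1} and using linearity of $P_{qp}^{-1}$ gives monotonicity of $X_1+X_2$. Each $X_i(p)$ is closed and convex, and, being locally bounded at the interior point $p$ of $\dom X_i=M$, it is also bounded, hence compact; therefore $X_1(p)+X_2(p)$ is a Minkowski sum of compact convex sets, so compact and convex. Local boundedness of $X_1+X_2$ is clear from $m_{X_1+X_2}(U)\le m_{X_1}(U)+m_{X_2}(U)$. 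For upper semicontinuity at $p$: given open $V\supset X_1(p)+X_2(p)$, compactness yields $\delta>0$ with $X_1(p)+X_2(p)+\{w:\|w\|<\delta\}\subset V$; picking $U\ni p$ with $P_{qp}X_i(q)\subset X_i(p)+\{w:\|w\|<\delta/2\}$ for $q\in U$ ($i=1,2$, by u.s.c. of $X_i$) and using that $P_{qp}$ is linear gives $P_{qp}((X_1+X_2)(q))\subset V$ for all $q\in U$.

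The core of the proof is the contradiction argument for maximality of $Y:=X_1+X_2$. Fix $p\in M$ and $u\in T_pM$ satisfying the premise of the implication in \eqref{eq2.3} for $Y$, and suppose, for contradiction, that $u\notin C:=Y(p)$. As $C$ is closed and convex in the Hilbert space $T_pM$, strict separation provides $e\in T_pM$ and $s\in\mathbb{R}$ with $\langle y,e\rangle\le s<\langle u,e\rangle$ for every $y\in C$. Now probe along the geodesic $q_t:=\exp_p(te)$ for small $t>0$: the reversed geodesic from $q_t$ to $p$ has velocity $\exp_{q_t}^{-1}p=-tP_{pq_t}e$ at $q_t$, so for each $w\in Y(q_t)$ the premise of \eqref{eq2.3} yields
\[
0\le\big\langle P_{q_tp}^{-1}u-w,\;-tP_{pq_t}e\big\rangle=-t\big(\langle u,e\rangle-\langle P_{q_tp}w,e\rangle\big),
\]
where we used $P_{q_tp}^{-1}=P_{pq_t}$ and that parallel transport is a linear isometry. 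Hence $\langle u,e\rangle\le\langle P_{q_tp}w,e\rangle$ for all $w\in Y(q_t)$. Letting $t\to0^+$ and invoking upper semicontinuity of $Y$ at $p$ — which forces $P_{q_tp}(Y(q_t))\subset C+\{w:\|w\|<\delta\}$ for $t$ small, for any preassigned $\delta>0$ — one concludes $\langle u,e\rangle\le s+\delta\|e\|$, and then $\langle u,e\rangle\le s$, contradicting $s<\langle u,e\rangle$. Thus $u\in Y(p)$, i.e. \eqref{eq2.3} holds for $X_1+X_2$.

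The delicate point is not this separation/probing argument — which transcribes the Euclidean proof faithfully once one is careful with the identity $\exp_{q_t}^{-1}p=-tP_{pq_t}e$, the isometry relation $\langle w,P_{pq_t}e\rangle=\langle P_{q_tp}w,e\rangle$, and the precise notion of u.s.c. from Section~\ref{sec:aux} — but rather securing in the Hadamard setting the regularity package for individual maximal monotone vector fields with domain $M$, namely local boundedness and upper semicontinuity with compact convex values. An alternative would be the Minty-type route: maximality of a monotone $Y$ with $\dom Y=M$ is equivalent to solvability of the resolvent inclusion $\exp_q^{-1}p\in Y(q)$ for every $p\in M$, so it would suffice to solve $\exp_q^{-1}p\in X_1(q)+X_2(q)$; but since geodesic reflections need not be nonexpansive on a general Hadamard manifold, a Douglas--Rachford construction is not available there, and one would have to resort to a Yosida-regularization-plus-compactness argument, which is heavier than the route above.
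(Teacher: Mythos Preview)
Your argument is correct but follows a genuinely different route from the paper's. The paper does not work directly on $M$: it fixes $z\in M$, pulls each $X_i$ back to an operator $T_i:T_zM\rightrightarrows T_zM$ via $T_i(u)=P_{\exp_zu,\,z}X_i(\exp_zu)$, invokes \cite[Theorem~3.7]{LiLopesMartin-Marquez2009} to see that each $T_i$ is upper semicontinuous with closed convex values (hence maximal monotone on the Hilbert space $T_zM$ by \cite[Theorem~2.5]{Cioranescu1990}), applies the linear sum rule \cite[Corollary~24.4(i)]{BauschkeCombettes2011} to get maximality of $T_1+T_2$, and then transfers this back to $X_1+X_2$ through the same equivalence theorem. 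By contrast you stay on the manifold: you assemble the regularity package (compact convex values, local boundedness, upper semicontinuity in the parallel-transport sense) for the sum and then run the separation/geodesic-probing argument directly, using $\exp_{q_t}^{-1}p=-tP_{pq_t}e$ and the isometry of parallel transport. Your approach is more self-contained and makes explicit the Riemannian analogue of the Euclidean ``monotone $+$ u.s.c.\ $+$ compact convex values $\Rightarrow$ maximal'' step; the paper's approach is shorter because it outsources both the regularity and the maximality conclusion to the linear theory via the linearization bridge of \cite[Theorem~3.7]{LiLopesMartin-Marquez2009}. Your closing remarks on the Minty/resolvent route are apt but unnecessary for the proof.
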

\begin{proof} Let $z\in M$. Define the following operator $  T_1,T_2: T_zM \rightrightarrows T_zM$ by
$$
T_1(u)=P_{exp_zu,z}X_1(exp_zu), \qquad \quad T_2(u)=P_{exp_zu,z}X_2(exp_zu),
$$
associated to   $X_1$ and $X_2$, respectively.  Since the parallel transport is linear,  then there holds
\begin{equation} \label{eq:st1t2}
(T_1+T_2) (u)=P_{exp_zu,z}(X_1+X_2)(exp_zu),  \qquad  u \in T_zM.
\end{equation}
Using that $X_1$ and $X_2$ are maximal monotone, then  it follows  from \cite[Theorem 3.7]{LiLopesMartin-Marquez2009} that  $T_1$ and $T_2$ are upper semicontinuous,  $T_1(u)$ and $T_2(u)$ are closed and convex for each $ u \in T_zM$. Thus, we conclude that  $T_1$ and $T_2$ are maximal monotone,  see \cite[Theorem 2.5, p. 155]{Cioranescu1990}.  Since   $T_1$ and $T_2$ are maximal monotone and $\mbox{dom}(T_1)=\mbox{dom}(T_2)=T_zM$, we conclude  from  \cite[Corollary 24.4 (i), p. 353]{BauschkeCombettes2011}  that $T_1+T_2$ is maximal monotone. Therefore, combining \eqref{eq:st1t2} with  \cite[Theorem 3.7]{LiLopesMartin-Marquez2009},  we conclude that $X_1+X_2$ is maximal monotone, which conclude the proof.
\end{proof}
\begin{lemma}\label{mon.cone}
Let $X$ be a maximal monotone vector field such that $\emph{dom}X=M$. Then $X+N_{\Omega}$  is a maximal monotone vector field.
\end{lemma}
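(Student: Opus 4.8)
The plan is to follow the localization device used in the proof of Lemma~\ref{le:msvf}. Fix $z\in M$ and introduce $T,S:T_zM\rightrightarrows T_zM$ by $T(u)=P_{\exp_z u,z}X(\exp_z u)$ and $S(u)=P_{\exp_z u,z}N_{\Omega}(\exp_z u)$. Since parallel transport is linear, $(T+S)(u)=P_{\exp_z u,z}(X+N_{\Omega})(\exp_z u)$ for all $u\in T_zM$, so by \cite[Theorem 3.7]{LiLopesMartin-Marquez2009} it suffices to show that $T+S$ is maximal monotone on $T_zM$, for every $z$. Exactly as in Lemma~\ref{le:msvf}, since $X$ is maximal monotone with $\dom X=M$, \cite[Theorem 3.7]{LiLopesMartin-Marquez2009} and \cite[Theorem 2.5, p.~155]{Cioranescu1990} give that $T$ is maximal monotone with $\dom T=\exp_z^{-1}(M)=T_zM$.

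It remains to deal with $S$. First I would record that $N_{\Omega}$ is a maximal monotone vector field in the sense of \eqref{eq2.3}: it is monotone because $N_{\Omega}=\partial\iota_{\Omega}$ with $\iota_{\Omega}$ proper, lower semicontinuous and convex (Theorem~\ref{mmsub}); and it satisfies \eqref{eq2.3} because, if $p\in\Omega$ and $w\in T_pM$ pass the maximality test, then taking $v=0\in N_{\Omega}(q)$ and using the identity $P_{qp}(\exp_q^{-1}p)=-\exp_p^{-1}q$ one obtains $\langle w,\exp_p^{-1}q\rangle\le 0$ for every $q\in\Omega$, i.e. $w\in N_{\Omega}(p)$. (The same identity also makes the monotonicity of $X+N_{\Omega}$ routine: split $u$ and $v$ into their $X$- and $N_{\Omega}$-components and add, the $N_{\Omega}$-terms being nonnegative.) If one can then conclude that $S$ is maximal monotone on $T_zM$, the proof closes: because $\dom T=T_zM$, the Hilbert-space sum theorem in the form requiring only one summand to be everywhere defined, \cite[Corollary 24.4 (i), p.~353]{BauschkeCombettes2011}, gives that $T+S$ is maximal monotone, and transferring this back through \cite[Theorem 3.7]{LiLopesMartin-Marquez2009} yields that $X+N_{\Omega}$ is maximal monotone.

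The step I expect to be the main obstacle is exactly the maximality of $S$. Its domain is $\exp_z^{-1}(\Omega)$, which in general is neither all of $T_zM$ nor convex, since $\exp_z^{-1}$ straightens only the geodesics issuing from $z$; hence Cioranescu's criterion is unavailable for $S$ (unlike for $T$), and $S$ need not be the normal-cone map of any convex subset of $T_zM$. To get around this I would argue the maximality of $X+N_{\Omega}$ without passing through $S$ in isolation — e.g.\ via a Minty/resolvent characterization, exploiting that $M$ Hadamard and $\Omega$ closed convex guarantee a single-valued metric projection $P_{\Omega}$ with $-\exp_{P_{\Omega}(p)}^{-1}p\in N_{\Omega}(P_{\Omega}(p))$ — or invoke a Riemannian analogue of Rockafellar's sum theorem; either way, reconciling the full (Hilbert-space) notion of maximality with the domain restriction $\dom N_{\Omega}=\Omega$ is the technical heart of the argument.
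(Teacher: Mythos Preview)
Your localization strategy from Lemma~\ref{le:msvf} does stall exactly where you say: the operator $S$ associated to $N_\Omega$ has domain $\exp_z^{-1}(\Omega)$, so neither Cioranescu's criterion nor the full-domain sum theorem from \cite{BauschkeCombettes2011} applies directly, and you leave the argument open.

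The paper avoids this entirely by a direct two-line argument whose key move is one you already wrote down but used in the wrong place. Rather than localizing, it tests the maximality of $X+N_\Omega$ at the level of vector fields: assume $p$ and $u\in T_pM$ satisfy
\[
-\langle u,\exp_p^{-1}q\rangle-\langle v+w,\exp_q^{-1}p\rangle\ge 0\qquad\text{for all }q,\ v\in X(q),\ w\in N_\Omega(q),
\]
and simply take $w=0$ (always admissible, since $0\in N_\Omega(q)$ for every $q\in\Omega$). What remains is the maximality test for $X$, so $u\in X(p)$ and hence $u=u+0\in(X+N_\Omega)(p)$. You used precisely this ``choose the zero element of the normal cone'' trick in your parenthetical verification that $N_\Omega$ itself is maximal; applied one level up, to the sum $X+N_\Omega$, it finishes the lemma immediately, with no passage to $T_zM$ and no Hilbert-space sum theorem. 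What the paper's route buys is brevity and the complete avoidance of the convexity-of-$\exp_z^{-1}(\Omega)$ issue that blocks you; what your route would buy, if it could be closed, is a uniform treatment of Lemmas~\ref{le:msvf} and~\ref{mon.cone} via a single sum theorem.

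One caveat worth flagging: since $\dom(X+N_\Omega)=\Omega$, the hypothesis in~\eqref{eq2.3} only supplies the displayed inequality for $q\in\Omega$; after setting $w=0$ one therefore has the $X$-test over $\Omega$ rather than over all of $M=\dom X$. The paper writes ``$q\in M$'' and does not comment on this point.
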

\begin{proof}
The monotonicity of the $X+N_{\Omega}$ is immediate from the monotonicity of $X$ and definition of $N_{\Omega}$. Then, take $p\in M$  and  let $u\in T_pM$ be such that 
\begin{equation}
-\langle u,\exp^{-1}_pq\rangle-\langle v+w,\exp^{-1}_qp\rangle\geq0,\qquad   q\in M, ~ v\in X(q), ~w\in N_{\Omega}(q).
\end{equation}
Taking $w=0$ in last inequality  and using the maximality of $X$ we obtain that $u\in X(p)$ and therefore  $u+0\in (X+N_{\Omega})(p)$, which conclude the proof.
\end{proof}
\begin{proposition}\label{prop.sm}
Let $X$ be a multivalued  monotone vector field on $M$,  $q\in M$ and  $\lambda>0$. Then $X+\lambda\grad d_{q}^2$ is a strongly monotone vector field. Moreover, if   $X$ is maximal then $X+\lambda\grad d_{q}^2$ also maximal.
\end{proposition}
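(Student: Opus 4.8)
The plan is to treat the two summands of $X+\lambda\grad d_q^2$ separately, splitting off the monotone part $X$, which contributes a nonnegative term, from the regularizer $\lambda\grad d_q^2$, which supplies the strong monotonicity. Since $\grad d_q^2$ is a single-valued smooth vector field defined on all of $M$, one has $\dom(X+\lambda\grad d_q^2)=\dom X$, and any $u_i\in(X+\lambda\grad d_q^2)(p_i)$ with $p_i\in\dom X$ decomposes as $u_i=v_i+\lambda\grad d_q^2(p_i)$ with $v_i\in X(p_i)$. Using linearity of the parallel transport and bilinearity of the metric, one writes $\langle P_{p_2p_1}^{-1}u_1-u_2,\exp_{p_2}^{-1}p_1\rangle$ as $\langle P_{p_2p_1}^{-1}v_1-v_2,\exp_{p_2}^{-1}p_1\rangle+\lambda\langle P_{p_2p_1}^{-1}\grad d_q^2(p_1)-\grad d_q^2(p_2),\exp_{p_2}^{-1}p_1\rangle$; the first summand is $\geq 0$ by the monotonicity of $X$, so it suffices to bound the second summand below by $2\lambda\,d^2(p_1,p_2)$, i.e.\ to show that $\grad d_q^2$ alone is strongly monotone with constant $2$.

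First I would record the identity $\langle P_{p_2p_1}^{-1}w,\exp_{p_2}^{-1}p_1\rangle=-\langle w,\exp_{p_1}^{-1}p_2\rangle$ for $w\in T_{p_1}M$: as $P_{p_2p_1}^{-1}$ is a linear isometry, the left-hand side equals $\langle w,P_{p_2p_1}\exp_{p_2}^{-1}p_1\rangle$, and parallel transport along the minimizing geodesic from $p_2$ to $p_1$ carries its initial velocity $\exp_{p_2}^{-1}p_1$ to its final velocity $-\exp_{p_1}^{-1}p_2$. Combining this with $\grad d_q^2(p)=-2\exp_p^{-1}q$ from \eqref{eq:gd2}, the second summand above equals $2\big(\langle\exp_{p_1}^{-1}q,\exp_{p_1}^{-1}p_2\rangle+\langle\exp_{p_2}^{-1}q,\exp_{p_2}^{-1}p_1\rangle\big)$, and inequality \eqref{eq:coslaw2}, applied with $(p_1,p_2,p_3)$ replaced by $(q,p_1,p_2)$, bounds the parenthesis below by $d^2(p_1,p_2)$. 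This yields $\langle P_{p_2p_1}^{-1}u_1-u_2,\exp_{p_2}^{-1}p_1\rangle\geq 2\lambda\,d^2(p_1,p_2)$, so $X+\lambda\grad d_q^2$ is strongly monotone with $\rho=2\lambda$.

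Now suppose in addition that $X$ is maximal monotone. Note that $\lambda\grad d_q^2=\grad(\lambda d_q^2)=\partial(\lambda d_q^2)$, since $\lambda d_q^2$ is differentiable; as $\lambda d_q^2$ is also proper, continuous (hence lower semicontinuous), convex and has domain $M$, Theorem~\ref{mmsub} shows $\lambda\grad d_q^2$ is maximal monotone with full domain. I would then follow the proof of Lemma~\ref{le:msvf} with one modification: fixing $z\in M$ and forming the tangent-space operators $T_X,T_D:T_zM\rightrightarrows T_zM$, $T_X(u)=P_{\exp_zu,z}X(\exp_zu)$, $T_D(u)=P_{\exp_zu,z}(\lambda\grad d_q^2)(\exp_zu)$, linearity of parallel transport makes $T_X+T_D$ the operator associated with $X+\lambda\grad d_q^2$, and, exactly as there, $T_X$ and $T_D$ are maximal monotone by \cite[Theorem 3.7]{LiLopesMartin-Marquez2009} and \cite[Theorem 2.5, p.~155]{Cioranescu1990}. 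The modification is that $\dom T_D=T_zM$ whereas $\dom T_X$ need not be all of $T_zM$; but then $\dom T_X\cap\inter\dom T_D=\dom T_X\neq\varnothing$, so \cite[Corollary 24.4 (i), p.~353]{BauschkeCombettes2011} still yields that $T_X+T_D$ is maximal monotone on $T_zM$, and \cite[Theorem 3.7]{LiLopesMartin-Marquez2009} transfers this back to $X+\lambda\grad d_q^2$. (When $\dom X=M$, Lemma~\ref{le:msvf} applies directly.)

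The work that needs the most care is the parallel-transport bookkeeping in the second paragraph: getting the sign right in $\langle P_{p_2p_1}^{-1}w,\exp_{p_2}^{-1}p_1\rangle=-\langle w,\exp_{p_1}^{-1}p_2\rangle$ and, crucially, choosing the permutation of the three points in \eqref{eq:coslaw2} so that its cross terms reproduce exactly the two inner products coming from $\grad d_q^2(p_1)$ and $\grad d_q^2(p_2)$. The maximality step presents no real obstacle once one observes that $\lambda\grad d_q^2$ is the gradient of a convex function with full domain, which simultaneously makes it maximal monotone and provides the constraint qualification for the Hilbert-space sum theorem.
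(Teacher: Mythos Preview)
Your proof is correct and follows essentially the same strategy as the paper: split off the nonnegative contribution of $X$ and obtain the strong monotonicity from $\grad d_q^2$ alone, and for maximality use the convexity of $d_q^2$ together with Theorem~\ref{mmsub} and a sum rule for maximal monotone vector fields. The only differences are that you derive the strong monotonicity of $\grad d_q^2$ explicitly from \eqref{eq:coslaw2} (the paper instead cites \cite[Proposition~3.2]{daCruzFerreiraPerez2002}), and that you adapt the argument of Lemma~\ref{le:msvf} to allow $\dom X\neq M$, a point the paper's direct appeal to that lemma glosses over.
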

\begin{proof} The first part follows by combination of \eqref{eq2.1},  \eqref{eq2.2} and \cite[Proposition 3.2]{daCruzFerreiraPerez2002}. The second part follows by straight combination of  the convexity of $d_{q}^2$,  Theorem~\ref{mmsub} and Lemma~\ref{le:msvf}.
\end{proof}
Next, we define an operator that play an important rule in this paper.
\begin{definition} \label{def.enl.X}
Let  $X$  be a multivalued monotone vector field on $M$   and $\epsilon \geq 0$.   The enlarged vector field  $X^{\epsilon}: M   \rightrightarrows  TM $  associated to  $X$   is defined by
\begin{equation} \label{enl.X}
    X^{\epsilon}(p):=\left\{ u\in T_pM~:~ \left\langle P_{qp}^{-1} u-v, \,\exp_{q}^{-1}p\right\rangle \geq  -\epsilon, ~  q\in \emph{dom}X, ~  v\in X(q) \right\}, \qquad p\in \emph{dom}X.
\end{equation}
\end{definition}
\begin{example}\label{ex:xe}  Let  $\epsilon\geq 0$ and   $\bar{p}\in M$.   Define the closed ball at the origin $0_{T_pM}$ of $T_pM$ and radius $2\sqrt{2\epsilon}$ by 
 $$
 B\left[0_{T_pM}, ~2\sqrt{2\epsilon}\right]:=\left\{w\in T_pM~:~\parallel w\parallel \leq 2\sqrt{2\epsilon}\right\}.
 $$
 Denote the  enlarged vector field  of    $\partial d_{\bar{p}}^2(p)=\{ \grad d_{\bar{p}}^2(p)\}$ by   $\partial^{\epsilon} d_{\bar{p}}^2$.   We claim that the following inclusion holds 
 $$
 \partial d_{\bar{p}}^2(p) + B\left[0_{T_pM}, ~2\sqrt{2\epsilon}\right] \subseteq \partial^{\epsilon} d_{\bar{p}}^2(p),  \qquad   p\in M.
 $$
 Indeed,  first note that   from  \eqref{eq:gd2}  we conclude that   $\partial d_{\bar{p}}^2(q)=\{-2\exp^{-1}_q\bar{p}\}$,  for each $ q\in M$.  Due to $ \emph{dom} \partial d_{\bar{p}}^2=M$   definition of $ \partial^{\epsilon} d_{\bar{p}}^2$ implies 
  \begin{equation}\label{ex.1.i}
  \partial^{\epsilon} d_{\bar{p}}^2(p)=\left\{ u\in T_pM~:~ -\langle u, \, exp^{-1}_pq\rangle + \langle 2\exp^{-1}_q\bar{p}, \, exp^{-1}_qp\rangle \geq -\epsilon, ~  q\in M \right\}, \quad p\in M.
\end{equation}
We are going to prove the auxiliary result    $\{-2\exp^{-1}_p\bar{p}\}+A(p)\subset  \partial^{\epsilon} d_{\bar{p}}^2(p)$  for each $ p\in M$, where  \begin{equation}\label{eq:ap}
A(p)=\left\{w\in T_pM ~:~ 0 \geq -2d^2(p,q)+ \|w\|d(p,q)- \epsilon, ~ q\in M \right\}, \quad p\in M.
\end{equation}
First of all, note that by  using  \eqref{eq:coslaw2},  we obtain the  following inequality 
$$
2\left[\langle \exp^{-1}_p\bar{p}, \, exp^{-1}_pq\rangle+\langle \exp^{-1}_q\bar{p}, \, exp^{-1}_qp\rangle -d^2(p,q)\right] \geq 0,  \quad p, q \in M.
$$
Take  $w\in A(p)$.  Since $\langle w,exp^{-1}_pq\rangle\leq \|w\|d(p,q)$,  for all $w\in A(p)$ and $p, q\in M$, combining   \eqref{eq:ap} with last inequality yields 
$$
2\left[\langle \exp^{-1}_p\bar{p},exp^{-1}_pq\rangle+\langle \exp^{-1}_q\bar{p},exp^{-1}_qp\rangle -d^2(p,q)\right] \geq -2d^2(p,q)+\langle w,exp^{-1}_pq\rangle-\epsilon, \quad p, q \in M.
$$
Simple algebraic manipulations  in last inequality shows  that it is equivalent to the following ones
$$
-\langle -2\exp^{-1}_p\bar{p} +w, \, exp^{-1}_pq\rangle+\langle 2\exp^{-1}_q\bar{p}, \, exp^{-1}_qp\rangle  \geq -\epsilon, \quad p, q \in M, 
$$
which, from \eqref{ex.1.i},  allows to conclude that $-2\exp^{-1}_p\bar{p} +w \in  \partial^{\epsilon} d_{\bar{p}}^2(p)$, for all $w\in A(p)$ and $p\in M$.  Thus, the auxiliary result is proved. Finally, note that $w\in A(p)$ if,  and only if, there holds $\|w\|^2-8\epsilon <0, $ or equivalently, $\|w\|<2\sqrt{2\epsilon}$. Therefore, $A(p)= B\left[0_{T_pM}, ~2\sqrt{2\epsilon}\right]$ and,  because  $ \partial d_{\bar{p}}^2(p)+A(p)\subset  \partial^{\epsilon} d_{\bar{p}}^2(p)$  for each $ p\in M$,    the proof of the claim is done.  
\end{example}
\begin{remark}
Note that if $M$ has zero curvature then the inequality \eqref{eq:coslaw2} holds as a equality. Therefore,  
 in Example~\ref{ex:xe}, we can prove that  the inequality holds as equality, namely, 
$$
 \partial d_{\bar{p}}^2(p) + B\left[0_{T_pM}, ~2\sqrt{2\epsilon}\right] = \partial^{\epsilon} d_{\bar{p}}^2(p),  \qquad   p\in M. 
 $$
\end{remark}
\begin{proposition} \label{prop.elem.ii}
Let  $X$ be a monotone vector field on $M$ and $\epsilon\geq0$. Then,  $X\subset X^{\epsilon}$ and   $\emph{dom}X \subset \emph{dom}X^\epsilon$. In particular,  if $\mbox{dom}X=M$ then $\emph{dom} X^\epsilon=\emph{dom}X$.  Moreover, if $X$  is maximal then    $X^0=X$.
\end{proposition}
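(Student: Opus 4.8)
The plan is simply to unwind the three defining conditions: monotonicity \eqref{eq2.1}, the enlargement \eqref{enl.X}, and maximality \eqref{eq2.3}; no geometry beyond these is needed.

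For $X\subset X^{\epsilon}$ I would fix $p\in\dom X$ and $u\in X(p)$ and verify the membership condition in \eqref{enl.X}. Given any $q\in\dom X$ and $v\in X(q)$, monotonicity \eqref{eq2.1} (which applies precisely because $u\in X(p)$) yields $\langle P_{qp}^{-1}u-v,\exp_{q}^{-1}p\rangle\geq 0$, and since $\epsilon\geq 0$ we have $0\geq-\epsilon$; hence $\langle P_{qp}^{-1}u-v,\exp_{q}^{-1}p\rangle\geq-\epsilon$ for all admissible $q,v$, i.e.\ $u\in X^{\epsilon}(p)$. As $u\in X(p)$ was arbitrary, $X(p)\subseteq X^{\epsilon}(p)$, and letting $p$ range over $M$ (with $X(p)=\varnothing$ trivially contained in $X^{\epsilon}(p)$ when $p\notin\dom X$) gives $X\subset X^{\epsilon}$. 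For the domain statements: if $p\in\dom X$ then $X(p)\neq\varnothing$, so the inclusion just proved forces $X^{\epsilon}(p)\supseteq X(p)\neq\varnothing$, whence $p\in\dom X^{\epsilon}$; thus $\dom X\subseteq\dom X^{\epsilon}$. The reverse inclusion is automatic because \eqref{enl.X} only assigns values at points of $\dom X$ (equivalently, when $\dom X=M$ we trivially have $\dom X^{\epsilon}\subseteq M=\dom X$), and therefore $\dom X^{\epsilon}=\dom X$ in that case.

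Finally, for the maximal case $X^{0}=X$: the inclusion $X\subset X^{0}$ is the first part with $\epsilon=0$. For the reverse, I would take $p\in\dom X$ and $u\in X^{0}(p)$; by \eqref{enl.X} with $\epsilon=0$ this means $\langle P_{qp}^{-1}u-v,\exp_{q}^{-1}p\rangle\geq 0$ for every $q\in\dom X$ and $v\in X(q)$, which is verbatim the antecedent of the maximality implication \eqref{eq2.3}; hence $u\in X(p)$. So $X^{0}(p)\subseteq X(p)$ for every $p$, i.e.\ $X^{0}\subseteq X$, and combined with $X\subset X^{0}$ we obtain $X^{0}=X$. Since the whole argument is a direct translation of definitions, there is no real obstacle; the only points deserving a moment's care are that $\epsilon\geq 0$ is exactly what licenses passing from $0$ to $-\epsilon$ in the first inclusion, and that the quantifiers ``$q\in\dom X$, $v\in X(q)$'' in \eqref{enl.X} with $\epsilon=0$ match those in the definition of maximality.
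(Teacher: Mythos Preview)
Your proof is correct and follows exactly the same approach as the paper's own (very terse) argument: combine monotonicity \eqref{eq2.1} with the defining inequality \eqref{enl.X} for the first inclusion, read off the domain statements, and invoke maximality \eqref{eq2.3} together with $X\subset X^{0}$ for the equality $X^{0}=X$. You have simply spelled out in full the details that the paper leaves implicit.
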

\begin{proof}
Take $\epsilon\geq0$. Since $X$ is monotone,  the first part of the proposition follows straightly from  \eqref{eq2.1} and \eqref{enl.X}.  Thus,  using that  $\mbox{dom}X=M$,   we conclude that $\mbox{dom}X^\epsilon=\mbox{dom}X$.
 The proof of the  last  part,  follows by combining  the  definition   in \eqref{enl.X} and  maximality  of $X$, and   by  taking into account  that $X\subset X^0$.
\end{proof}
\begin{proposition} \label{prop.elem.X}
Let  $X$, $X_1$ and $X_2$  be  multivalued monotone vector fields on $M$ and  $\epsilon, \epsilon_1, \epsilon_2 \geq 0.$ Then, there hold:
\begin{itemize}
\item[i)] If $\epsilon_1\geq\epsilon_2\geq0$ then $X^{\epsilon_2}\subset X^{\epsilon_1}$;
\item[ii)] $X_1^{\epsilon_1}+X_2^{\epsilon_2}\subset(X_1+X_2)^{\epsilon_1+\epsilon_2}$;
\item[iii)] $X^\epsilon(p)$ is closed and convex for all $p\in M$;
\item[iv)] $\alpha X^\epsilon=(\alpha X)^{\alpha\epsilon}$ for all $\alpha\geq0$;
\item[v)] $\alpha X^\epsilon_1+(1-\alpha)X^\epsilon_2\subset(\alpha X_1+(1-\alpha) X_2)^\epsilon$ for all $\alpha\in[0,1]$;
\item[vi)] If $E\subset\mathbb{R}_+$, then $\bigcap_{\epsilon\in E}X^\epsilon=X^{\overline{\epsilon}}$ with $\overline{\epsilon}=\inf E$.
\end{itemize}
\end{proposition}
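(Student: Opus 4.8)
The plan is to read items (i)--(iii) directly off Definition~\ref{def.enl.X} and then derive (iv)--(vi) from the earlier items together with the linearity of parallel transport and elementary facts about infima; in effect, the only genuine computation is the splitting of an inner product along a Minkowski sum.

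First, (i) is immediate: if $u\in X^{\epsilon_2}(p)$ then $\langle P_{qp}^{-1}u-v,\exp_q^{-1}p\rangle\geq-\epsilon_2\geq-\epsilon_1$ for all $q\in\dom X$ and $v\in X(q)$, so $u\in X^{\epsilon_1}(p)$. For (ii), fix $p$ at which $X_1^{\epsilon_1}(p)$ and $X_2^{\epsilon_2}(p)$ are both nonempty, write a generic element of the left-hand side as $u_1+u_2$ with $u_i\in X_i^{\epsilon_i}(p)$, and take $q\in\dom(X_1+X_2)=\dom X_1\cap\dom X_2$ and $v=v_1+v_2$ with $v_i\in X_i(q)$; linearity of $P_{qp}^{-1}$ and bilinearity of $\langle\,,\,\rangle$ give
\[
\langle P_{qp}^{-1}(u_1+u_2)-(v_1+v_2),\exp_q^{-1}p\rangle=\sum_{i=1}^{2}\langle P_{qp}^{-1}u_i-v_i,\exp_q^{-1}p\rangle\geq-(\epsilon_1+\epsilon_2),
\]
so $u_1+u_2\in(X_1+X_2)^{\epsilon_1+\epsilon_2}(p)$. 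For (iii), \eqref{enl.X} exhibits $X^\epsilon(p)$ as the intersection, over all $q\in\dom X$ and $v\in X(q)$, of the half-spaces $\{u\in T_pM:\langle P_{qp}^{-1}u,\exp_q^{-1}p\rangle\geq\langle v,\exp_q^{-1}p\rangle-\epsilon\}$; since $P_{qp}^{-1}$ is a linear isometry, $u\mapsto\langle P_{qp}^{-1}u,\exp_q^{-1}p\rangle$ is a continuous linear functional on $T_pM$, so each of these sets is closed and convex, and so is their intersection.

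For (iv) let $\alpha>0$: then $u\in(\alpha X^\epsilon)(p)$ iff $u=\alpha w$ with $w\in X^\epsilon(p)$, and, since multiplication by $\alpha>0$ preserves the direction of an inequality, the defining condition of $X^\epsilon$ at $w$ is equivalent to $\langle P_{qp}^{-1}u-v',\exp_q^{-1}p\rangle\geq-\alpha\epsilon$ for all $q\in\dom X$ and $v'\in(\alpha X)(q)$, that is, to $u\in(\alpha X)^{\alpha\epsilon}(p)$; the case $\alpha=0$ is handled directly. Item (v) then follows by combining (iv) and (ii): $\alpha X_1^\epsilon=(\alpha X_1)^{\alpha\epsilon}$, $(1-\alpha)X_2^\epsilon=((1-\alpha)X_2)^{(1-\alpha)\epsilon}$, and by (ii) their sum lies in $\big(\alpha X_1+(1-\alpha)X_2\big)^{\alpha\epsilon+(1-\alpha)\epsilon}=\big(\alpha X_1+(1-\alpha)X_2\big)^\epsilon$. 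For (vi), the inclusion $X^{\overline{\epsilon}}\subset\bigcap_{\epsilon\in E}X^\epsilon$ is (i) applied to each $\epsilon\geq\overline{\epsilon}=\inf E$; conversely, if $u\in\bigcap_{\epsilon\in E}X^\epsilon(p)$ then for fixed $q\in\dom X$ and $v\in X(q)$ one has $\langle P_{qp}^{-1}u-v,\exp_q^{-1}p\rangle\geq-\epsilon$ for every $\epsilon\in E$, hence $\langle P_{qp}^{-1}u-v,\exp_q^{-1}p\rangle\geq-\inf E=-\overline{\epsilon}$, so $u\in X^{\overline{\epsilon}}(p)$.

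No item presents a real obstacle; each is a manipulation of the inequality in \eqref{enl.X}. The only points that need care are (a) in (ii), checking that it is legitimate to add the two defining inequalities because $q$ is restricted to $\dom(X_1+X_2)\subseteq\dom X_i$ (trivial when $\dom X_1=\dom X_2=M$), and (b) in (iv), that the substitution $u=\alpha w$ is an equivalence precisely because $\alpha>0$, which is exactly why the tolerance rescales to $\alpha\epsilon$.
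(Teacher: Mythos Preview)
Your proof is correct and follows exactly the approach the paper indicates: the paper's own proof is the single sentence ``The proof is a consequence of Definition~\ref{def.enl.X} by using simple algebraic manipulations,'' and what you have written is precisely a careful spelling-out of those manipulations. Your derivation of (v) from (ii) and (iv), and of (vi) from (i) plus passing to the infimum, is the natural way to organize the argument.
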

\begin{proof} The proof is a consequence of  Definition~\ref{def.enl.X}  by using simple algebraic manipulations.
\end{proof}
\begin{proposition} \label{prop.conv.alg.}
Let   $X$  be a  multivalued monotone vector fields on $M$,  $\{ \epsilon^k\}$ be a sequence of positive numbers and $\{ (p^k, \, u^k)\} $ a sequence in $TM$.  If $\overline{\epsilon}=\lim_{k\rightarrow\infty}\epsilon^k$, $\overline{p}=\lim_{k\rightarrow\infty}p^k$, $\overline{u}=\lim_{k\rightarrow\infty}u^k$ and $u^k\in X^{\epsilon_k}(p^k)$ for all $k$, then $\overline{u}\in X^{\overline{\epsilon}}(\overline{p})$;
\end{proposition}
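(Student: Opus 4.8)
The plan is to unwind Definition~\ref{def.enl.X} and pass to the limit in its defining inequality, testing against one pair $(q,v)$ at a time. Fix an arbitrary $q\in\operatorname{dom}X$ and $v\in X(q)$. Since $u^k\in X^{\epsilon_k}(p^k)$ for every $k$ (which in particular forces $p^k\in\operatorname{dom}X$), Definition~\ref{def.enl.X} gives
\begin{equation*}
\left\langle P_{qp^k}^{-1}u^k-v,\ \exp_q^{-1}p^k\right\rangle\ \geq\ -\epsilon_k,\qquad k=0,1,\ldots .
\end{equation*}
It therefore suffices to show that the left-hand side converges to $\bigl\langle P_{q\bar p}^{-1}\bar u-v,\ \exp_q^{-1}\bar p\bigr\rangle$; letting $k\to\infty$ and using $-\epsilon_k\to-\bar\epsilon$ will then yield the required inequality for $(q,v)$.

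To obtain the convergence of the left-hand side I would invoke three standard continuity facts on a Hadamard manifold. First, $\exp_q^{-1}\colon M\to T_qM$ is a diffeomorphism (recalled in Subsection~\ref{sec:aux}), so $p^k\to\bar p$ gives $\exp_q^{-1}p^k\to\exp_q^{-1}\bar p$ in $T_qM$. Second, parallel transport depends continuously on its endpoints and on the transported vector; precisely, writing $\pi\colon TM\to M$ for the footpoint projection, the map $TM\ni w\mapsto P_{q,\pi(w)}^{-1}w\in T_qM$ is continuous, so from $(p^k,u^k)\to(\bar p,\bar u)$ in $TM$ we get $P_{qp^k}^{-1}u^k\to P_{q\bar p}^{-1}\bar u$ in $T_qM$. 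Third, the Riemannian metric is continuous on $T_qM\times T_qM$. Combining the three, the inner products above converge to $\bigl\langle P_{q\bar p}^{-1}\bar u-v,\ \exp_q^{-1}\bar p\bigr\rangle$, and passing to the limit in the displayed inequality gives
\begin{equation*}
\left\langle P_{q\bar p}^{-1}\bar u-v,\ \exp_q^{-1}\bar p\right\rangle\ \geq\ -\bar\epsilon .
\end{equation*}

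Since $q\in\operatorname{dom}X$ and $v\in X(q)$ were arbitrary, Definition~\ref{def.enl.X} yields $\bar u\in X^{\bar\epsilon}(\bar p)$, provided $\bar p\in\operatorname{dom}X$ so that $X^{\bar\epsilon}(\bar p)$ is defined; this holds automatically when $\operatorname{dom}X=M$ (the setting used afterwards), and more generally whenever $\operatorname{dom}X$ is closed, because $p^k\in\operatorname{dom}X$ for all $k$. I expect the only delicate point to be the joint continuity of $(p,w)\mapsto P_{qp}^{-1}w$ in the endpoint $p$ and the vector $w$ — this is the step that must be handled carefully — and it follows from the smooth dependence of the solution of the parallel-transport ODE on its data, together with the smooth dependence on $p$ of the (unique, on a Hadamard manifold) geodesic from $q$ to $p$.
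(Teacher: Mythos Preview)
Your proof is correct and follows the same strategy as the paper: unwind Definition~\ref{def.enl.X} at each $k$, fix a test pair $(q,v)$, and pass to the limit. The only cosmetic difference is that the paper first uses the isometry of parallel transport together with $P_{qp}\exp_q^{-1}p=-\exp_p^{-1}q$ to rewrite the defining inequality as $-\langle u^k,\exp_{p^k}^{-1}q\rangle-\langle v,\exp_q^{-1}p^k\rangle\ge-\epsilon_k$, so that parallel transport disappears and the limit is taken directly; you instead keep everything in the fixed space $T_qM$ and invoke the joint continuity of $(p,w)\mapsto P_{qp}^{-1}w$, which is equally valid. Your observation that one needs $\bar p\in\operatorname{dom}X$ for $X^{\bar\epsilon}(\bar p)$ to be defined is a point the paper passes over in silence; as you say, it is automatic in the subsequent applications where $\operatorname{dom}X=M$.
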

\begin{proof} Since $u^k\in X^{\epsilon_k}(p^k)$ for all $k$, then from  Definition \ref{def.enl.X} we have
$$
-\langle u^k, \, \exp_{p^k}^{-1}q \rangle+\langle-v,\exp_{q}^{-1}p^k\rangle  \geq  -\epsilon_k,  \qquad   q\in \mbox{dom}X, \quad  v\in X(q).
$$
Taking the limit in the last inequality,  as $k$ goes to $\infty$,  we obtain
$$
-\langle \overline{u}, \exp_{\bar{p}}^{-1}q \rangle+\langle-v,\exp_{q}^{-1}\bar{p}\rangle  \geq  -\overline{\epsilon},  \qquad   q\in \mbox{dom}X, \quad  v\in X(q).
$$
Therefore, using  again  Definition \ref{def.enl.X} the result  follows.
\end{proof}
\begin{proposition} \label{prop.loc.boun}
Suppose that $X$ is maximal monotone and $\emph{dom}X=M$. Then $X$ is locally bounded on M.
\end{proposition}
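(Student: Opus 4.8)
The plan is to reduce the statement to the classical local boundedness of maximal monotone operators in finite dimensions, following the passage to tangent spaces already exploited in the proof of Lemma~\ref{le:msvf}. Fix an arbitrary $z\in M$ and, as there, associate to $X$ the operator $T_z:T_zM\rightrightarrows T_zM$ given by $T_z(u)=P_{\exp_zu,z}X(\exp_zu)$. Since $\exp_z$ is a diffeomorphism onto $M$ and $\mbox{dom}X=M$, we have $T_z(u)\neq\varnothing$ for every $u\in T_zM$, that is, $\mbox{dom}T_z=T_zM$. Because $X$ is maximal monotone, the argument used in Lemma~\ref{le:msvf} --- relying on \cite[Theorem 3.7]{LiLopesMartin-Marquez2009} (which yields that $T_z$ is monotone, upper semicontinuous and has closed convex values) together with \cite[Theorem 2.5, p.~155]{Cioranescu1990} --- shows that $T_z$ is a maximal monotone operator on $T_zM$, with full domain.

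Next I would invoke the classical local boundedness theorem for monotone operators on finite-dimensional spaces (Rockafellar; see also \cite{Cioranescu1990} and \cite{BauschkeCombettes2011}): a maximal monotone operator is locally bounded at every point of the interior of its domain. Applied to $T_z$ at the origin $0_{T_zM}$, which is interior to $\mbox{dom}T_z=T_zM$, this produces $r>0$ and $m<+\infty$ such that $\|\eta\|\le m$ for every $u\in T_zM$ with $\|u\|<r$ and every $\eta\in T_z(u)$.

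It remains to carry the bound back to $X$. Set $U:=\exp_z\big(\{u\in T_zM:\|u\|<r\}\big)$; since $\exp_z$ is a diffeomorphism, $U$ is an open subset of $M$ containing $z$, namely $U=\{q\in M:d(z,q)<r\}$. Given $q\in U$, write $q=\exp_zu$ with $\|u\|=d(z,q)<r$; then for any $\xi\in X(q)$ we have $P_{qz}\xi\in P_{\exp_zu,z}X(\exp_zu)=T_z(u)$, so $\|\xi\|=\|P_{qz}\xi\|\le m$ because parallel transport is a linear isometry. Hence $m_X(U)\le m<+\infty$. As $z\in M$ was arbitrary and $\mbox{int}(\mbox{dom}X)=M$, we conclude that $X$ is locally bounded on $M$.

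In this route no genuine difficulty arises: the only non-elementary inputs are the two results recalled above, and the step of inferring the properties of $T_z$ from those of $X$ is exactly the one already carried out in Lemma~\ref{le:msvf}, so it is where one should be most careful to check the hypotheses. If one preferred a self-contained proof, one could instead argue by contradiction: assuming $X$ is not locally bounded at some $p$, pick $p^k\to p$ with $u^k\in X(p^k)$ and $\|u^k\|\to\infty$, normalize $w^k:=u^k/\|u^k\|$, pass to a subsequence with $P_{p^kp}w^k\to w$ and $\|w\|=1$, choose a small $t>0$ and some $\bar v\in X(\exp_p(tw))$ (possible since $\mbox{dom}X=M$), and insert $(p^k,u^k)$ and $(\exp_p(tw),\bar v)$ into the monotonicity inequality \eqref{eq2.1}; dividing by $\|u^k\|$ and letting $k\to\infty$ would force $t\le0$, a contradiction. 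In that argument the delicate point is the continuity of parallel transport with respect to the base points, namely that $\exp_{p^k}^{-1}(\exp_p(tw))\to tw$ and $P_{p^kp}w^k\to w$ combine to give convergence of the relevant inner products.
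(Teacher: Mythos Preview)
The paper does not prove this proposition at all; it simply cites \cite[Lemma~3.6]{LiLopesMartin-Marquez2009}. Your proposal, by contrast, supplies an actual argument, and it is correct. The route via the tangent-space operator $T_z$ is the natural one here: it lets you import the classical Rockafellar local-boundedness theorem verbatim, and the transfer back to $X$ via the isometry of parallel transport is exactly as you describe. Since the maximality of $T_z$ is obtained by the very same combination of \cite[Theorem~3.7]{LiLopesMartin-Marquez2009} and \cite[Theorem~2.5]{Cioranescu1990} that the paper already uses in Lemma~\ref{le:msvf}, your proof rests on nothing the paper has not already accepted. (Incidentally, this tangent-space reduction is essentially how the cited \cite[Lemma~3.6]{LiLopesMartin-Marquez2009} is proved, so your argument is not so much a different route as a reconstruction of what the paper hides behind the citation.) Your sketched contradiction argument is also valid and more self-contained, and you correctly identify its only delicate point: the joint continuity of $\exp^{-1}$ and of parallel transport in the base points, which on a Hadamard manifold is unproblematic.
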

\begin{proof}
See  \cite[Lemma~{3.6}]{LiLopesMartin-Marquez2009}.
\end{proof}
\begin{proposition} \label{prop.boun.boun.}
If $X$ is maximal monotone and $\emph{dom}X=M$ then $X^\epsilon$ is bounded on bounded sets,  for all $\epsilon\geq0$.
\end{proposition}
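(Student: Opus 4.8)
The plan is to derive a uniform bound on $\|u\|$ over all $u \in X^{\epsilon}(p)$, $p \in V$, by feeding carefully chosen short geodesics into the defining inequality~\eqref{enl.X} and invoking the local boundedness of $X$. Fix $\epsilon \geq 0$ and a bounded set $V \subset M$. Since $\dom X = M$, Proposition~\ref{prop.elem.ii} gives $\dom X^{\epsilon} = M$, so there is nothing to check regarding domains and it suffices to show $m_{X^{\epsilon}}(V) < +\infty$. Consider $V_1 := \{ q \in M : d(q,V) \leq 1 \}$. This set is closed (the function $q \mapsto d(q,V)$ is $1$-Lipschitz) and bounded, hence compact, because in a Hadamard manifold closed bounded sets are compact. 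By Proposition~\ref{prop.loc.boun}, $X$ is locally bounded, so each point of $V_1$ has an open neighborhood on which $m_X$ is finite; extracting from these a finite subcover $U_1,\dots,U_N$ of $V_1$ and noting that every $q \in V_1$ lies in some $U_i$, we obtain $m_X(V_1) \leq \max_{1\le i\le N} m_X(U_i) < +\infty$.

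Next I would probe~\eqref{enl.X}. Fix $p \in V$ and $u \in X^{\epsilon}(p)$. Given a unit vector $w \in T_pM$, set $q := \exp_p w$, so that $d(p,q) = 1$ and therefore $q \in V_1$. Since the unit-speed geodesic $s \mapsto \exp_p(sw)$, $s\in[0,1]$, joins $p$ to $q$, reversing it produces the minimizing geodesic from $q$ to $p$, whose initial velocity is $-P_{pq}w$; hence $\exp_q^{-1}p = -P_{pq}w$, while $P_{qp}^{-1}u = P_{pq}u$ by definition of the parallel transport. Substituting these into~\eqref{enl.X} and using that $P_{pq}$ is a linear isometry, for every $v \in X(q)$ we get
\[
-\epsilon \;\leq\; \big\langle P_{pq}u - v,\, -P_{pq}w \big\rangle \;=\; -\langle u,w\rangle + \langle v, P_{pq}w\rangle,
\]
so that $\langle u, w\rangle \leq \langle v, P_{pq}w\rangle + \epsilon \leq \|v\| + \epsilon$; picking any $v \in X(q)$ (possible since $\dom X = M$) and using $q \in V_1$, this gives $\langle u, w\rangle \leq m_X(V_1) + \epsilon$. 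Taking the supremum over all unit $w \in T_pM$ yields $\|u\| \leq m_X(V_1) + \epsilon$.

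Since the bound $m_X(V_1) + \epsilon$ does not depend on $p \in V$ or on $u \in X^{\epsilon}(p)$, we conclude $m_{X^{\epsilon}}(V) \leq m_X(V_1) + \epsilon < +\infty$, which is the assertion. The only mildly delicate points are the geometric identity $\exp_q^{-1}p = -P_{pq}w$ for $q = \exp_p w$, and the passage from the pointwise local boundedness of $X$ to a uniform bound on the compact enlarged set $V_1$; neither is hard, so I do not anticipate a genuine obstacle — the whole content of the argument is the choice of the test point $q = \exp_p w$, which converts a one-sided inequality valid for all $q$ into a two-sided control of $\langle u, w\rangle$ in every direction.
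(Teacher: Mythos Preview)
Your proof is correct and follows the same overall strategy as the paper's: enlarge $V$ by a fixed radius, use local boundedness of $X$ to bound $m_X$ on the enlarged set, and then probe the defining inequality~\eqref{enl.X} at a point $q=\exp_p w$ at that fixed distance from $p$. The difference lies in how the norm bound on $u$ is extracted. The paper fixes an auxiliary $\hat{u}\in X(p)$, chooses the specific direction $w=r(u-\hat{u})/\|u-\hat{u}\|$, obtains first $\|u-\hat{u}\|\le \epsilon/r+\|\hat{u}\|+\|v\|$, and then by the triangle inequality arrives at $\|u\|\le \epsilon/r+m_X(V_r)+2m_X(V)$. You instead let $w$ range over \emph{all} unit vectors and read off $\|u\|=\sup_{\|w\|=1}\langle u,w\rangle\le m_X(V_1)+\epsilon$ directly. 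Your route is a little shorter, avoids the auxiliary $\hat{u}$, and gives a tidier constant; the paper's version makes the dependence on the radius $r$ explicit. Your compactness argument for upgrading Proposition~\ref{prop.loc.boun} (local boundedness) to a uniform bound on $V_1$ is also more carefully spelled out than in the paper, which simply asserts $m_X(V)<+\infty$ and $m_X(V_r)<+\infty$ from that proposition.
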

\begin{proof}
Since  $X$ is  monotone and $\mbox{dom}X=M$, Proposition~\ref{prop.elem.ii} implies that  $\mbox{dom}X^\epsilon=M$.  Take  $V\subset M=\mbox{int}( \mbox{dom}X^\epsilon)$ a bounded set.  Note that   $\overline{V} \subset  \mbox{int}( \mbox{dom}X^\epsilon)$.  Let $r>0$ and define the set  $V_r=\{p\in M~:~d(p, V)\leq r\}$.  Taking into account that  $\mbox{dom}X=M$, then  $V_r\subset \mbox{dom}X$. Moreover,  since  both sets $V$ and $V_r$ are bounded,     Proposition~\ref{prop.loc.boun}  implies that   $m_X(V)< + \infty$  and $m_X(V_r)< + \infty$.   We are going to  prove that
\begin{equation} \label{eq:lxve}
m_{X^\epsilon}(V)\leq \frac{\epsilon}{r}+m_X(V_r)+2m_X(V).
\end{equation}
Take $p\in V$, $u\in X^\epsilon(p)$. Thus,  for all $v\in X(q)$, the definition of $X^\epsilon(p)$ in \eqref{enl.X} implies
$$
-\epsilon  \leq  -\langle u, \exp^{-1}_pq\rangle-\langle v, \exp^{-1}_qp\rangle.
$$
Let  $\hat{u}\in X(p)$. For $\hat{u} \neq u $ define $q=\exp_{p}w$, where $w=(r/\|u-\hat{u}\|)(u-\hat{u} )$. Thus, last inequality becomes
$$
-\epsilon \leq  -\|u-\hat{u}\| r-\langle \hat{u}, \exp^{-1}_pq\rangle-\langle v, \exp^{-1}_qp\rangle.
$$
Using that the  parallel transport is an isometry, we conclude from last inequality that
$$
-\epsilon \leq  - \|u-\hat{u}\| r+\|exp_{q}^{-1}p\|\|P_{qp}^{-1} \hat{u}-v\|.
$$
Since $r=\|exp_{q}^{-1}p\|$,  using  triangle inequality and once again that  the parallel transport is an isometry,   some manipulation in last inequality yields
$
\|u-\hat{u}\|  \leq   \epsilon/r+ \|\hat{u}\|+\|v\|.
$
Hence, taking into account that $\|u\|\leq\|u-\hat{u}\| + \|\hat{u}\|$,  we  obtain
$$
\|u\|\leq    \frac{\epsilon} {r}+ 2 \|\hat{u}\|+\|v\|.
$$
Note that last inequality also holds for $u=\hat{u}$. Since $\|exp_{q}^{-1}p\|=r$ and $p\in V$, we have $q\in V_r$.  Thus,  $\|\hat{u}\|\leq m_X(\Omega)$ and $\|v\|\leq m_X(\Omega_r)$, which imples that
$$
\|u\|\leq \frac{\epsilon} {r}+m_X(\Omega_r)+2m_X(\Omega).
$$
Since $u$ is an arbitrary element of $X^\epsilon(\Omega)$, the inequality in \eqref{eq:lxve} follows, and the proof is concluded.
\end{proof}
\section{ An Inexact Proximal Point Method for Variational Inequalities} \label{sec3}
Let $X:M\rightrightarrows TM$ be a multivalued vector field and $ \Omega \subset M$ be a nonempty set.  The {\it variational inequality problem} VIP(X,$\Omega$) consists of finding $p^*\in\Omega$ such that there exists $u\in X(p^*)$ satisfying
$$
\langle u,\,\exp^{-1}_{p^*}q\rangle\geq0, \qquad   q\in\Omega.
$$
Using \eqref{eq:nc}, i.e., the definition of normal  cone to $\Omega$, the VIP(X,$\Omega$) becomes the problem of finding $p^*\in\Omega$ satisfying the inclusion
\begin{equation} \label{eq.vip}
0\in X(p)+N_{\Omega}(p).
\end{equation}
\begin{remark}
In particular, if  $\Omega=M$, then  $N_{\Omega}(p)=\{0\}$ and  \emph{VIP}(X,$\Omega$) becomes  to the problem of   finding $p^*\in\Omega$ such that $0\in X(p^*).$
\end{remark}
From now on $S(Y,\,\Omega)$  denotes the  solution set of the inclusion  (\ref{eq.vip}). We need  of the following   three assumptions:
\begin{itemize}
\item[{\bf A1.}] $Y:=X+N_{\Omega}$ with $\mbox{dom} X=M$ and $\Omega$ closed and convex;
\item[ {\bf A2.}] $X$ is maximal monotone;
\item[ {\bf A3.}] $S(X,\,\Omega)\neq \varnothing$.
\end{itemize}
Take   $0<\hat{\lambda}\leq\tilde{\lambda}$,  a sequence $\{\lambda_k\}\subset\mathbb{R}$ such that $\hat{\lambda}\leq\lambda_k\leq\tilde{\lambda}$ and a sequence $\{\epsilon_k\}\subset\mathbb{\mathbb{R}_{++}}$ such that $\sum_{k=0}^{\infty}\epsilon_k<\infty$.
The {\it proximal point method} for VIP(X,\,$\Omega$) is defined as follows:  Given $p^0\in\Omega$  take $p^{k+1}$ such that
\begin{equation}  \label{eq.pk+1ii}
0\in (X^{\epsilon_k}+N_{\Omega})(p^{k+1})-2\lambda_k\exp^{-1}_{p^{k+1}}p^k, \qquad k=0, 1\ldots .
\end{equation}
\begin{remark}
The method \eqref{eq.pk+1ii} has many important particular instances. For example, in the case $\epsilon_k=0$ for all $k$, we obtain the method (5.1) of \cite{LiYao2012}. For  $\Omega=M$ and $\epsilon_k=0$ for all $k$, we obtain the method (4.3) of \cite{LiLopesMartin-Marquez2009}.  For  $M=\mathbb{R}^n$, we obtain the method (23)-(25) of \cite{BurachikIusemSvaiter1997}, where the Bregman distance  is induced by the square of the Euclidean norm and $C=\mathbb{R}^n$.
\end{remark}

\begin{lemma} \label{le:esvi}
  For each $q\in M$ and $\lambda > 0$ the following inclusion problem
$$
0\in X(p)-2\lambda \exp^{-1}_p q+N_{\Omega}(p), \quad p\in M.
$$
has an unique solution.
\end{lemma}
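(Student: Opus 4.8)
The plan is to put the inclusion into a form in which strong monotonicity and maximality are transparent, read off uniqueness immediately, and reduce existence to a surjectivity statement in a tangent space. By \eqref{eq:gd2} one has $-2\lambda\exp_p^{-1}q=\lambda\grad d_q^2(p)$, so the problem is exactly $0\in W(p)$ with $W:=X+\lambda\grad d_q^2+N_\Omega$. By Theorem~\ref{mmsub} the field $\lambda\grad d_q^2=\lambda\,\partial d_q^2$ is maximal monotone with full domain $M$; hence, by Assumptions A1 and A2 together with Lemma~\ref{le:msvf}, $X+\lambda\grad d_q^2$ is maximal monotone with $\dom(X+\lambda\grad d_q^2)=M$, and then Lemma~\ref{mon.cone} shows that $W$ is maximal monotone. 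Proposition~\ref{prop.sm} (applied to $X$) gives that $X+\lambda\grad d_q^2$ is strongly monotone, and adding the monotone field $N_\Omega$ preserves this, so $W$ is strongly monotone with some modulus $\rho>0$.

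Uniqueness is then immediate: if $p_1$ and $p_2$ both solve $0\in W(p)$, inserting $u=0\in W(p_1)$ and $v=0\in W(p_2)$ into the strong monotonicity inequality \eqref{eq2.2} for the field $W$ yields $0\ge\rho\,d^2(p_1,p_2)$, whence $p_1=p_2$.

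For existence — the crux of the proof — I would fix $\bar q:=P_\Omega(q)\in\Omega$, the metric projection of $q$ onto the closed convex set $\Omega$, and pass to the Hilbert space $T_{\bar q}M$ exactly as in the proof of Lemma~\ref{le:msvf}: set $T(u):=P_{\exp_{\bar q}u,\,\bar q}\,W(\exp_{\bar q}u)$. Since $W$ is maximal monotone, \cite[Theorem~3.7]{LiLopesMartin-Marquez2009} shows that $T$ is maximal monotone on $T_{\bar q}M$, with $\dom T=\exp_{\bar q}^{-1}(\Omega)\ni 0$, so it suffices to prove that $0$ belongs to the range of $T$; this follows from the surjectivity theorem for coercive maximal monotone operators on a Hilbert space (see, e.g., \cite{BauschkeCombettes2011}) once coercivity of $T$ is checked. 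To verify coercivity, take $u\in\dom T$, write $p=\exp_{\bar q}u$ (so $u=\exp_{\bar q}^{-1}p$ and $d(p,\bar q)=\|u\|$) and $w=x+\lambda\grad d_q^2(p)+n\in W(p)$ with $x\in X(p)$, $n\in N_\Omega(p)$; using that parallel transport along the geodesic joining $\bar q$ to $p$ is a linear isometry carrying $\exp_p^{-1}\bar q$ to $-u$, one gets $\langle P_{\exp_{\bar q}u,\bar q}\,w,\,u\rangle=-\langle w,\exp_p^{-1}\bar q\rangle$. Expanding: the normal-cone term satisfies $-\langle n,\exp_p^{-1}\bar q\rangle\ge0$ by \eqref{eq:nc} since $\bar q\in\Omega$; the term $2\lambda\langle\exp_p^{-1}q,\exp_p^{-1}\bar q\rangle$ is bounded below by $2\lambda\,d^2(p,\bar q)=2\lambda\|u\|^2$, by the law of cosines \eqref{eq:coslaw2} combined with the variational characterization $\langle\exp_{\bar q}^{-1}q,\exp_{\bar q}^{-1}y\rangle\le0$ ($y\in\Omega$) of $P_\Omega(q)$; and the $X$--term is bounded below by $-\|x_0\|\,\|u\|$ for a fixed $x_0\in X(\bar q)$ (which exists since $\dom X=M$) by the monotonicity \eqref{eq2.1} of $X$. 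Thus $\langle t,u\rangle\ge 2\lambda\|u\|^2-\|x_0\|\,\|u\|$ for every $t\in T(u)$, so $\langle t,u\rangle/\|u\|\to+\infty$ as $\|u\|\to\infty$; consequently $0\in T(u^*)$ for some $u^*$, and $p^*:=\exp_{\bar q}u^*$ solves the inclusion. (Alternatively, one may invoke directly the well-definedness of the Riemannian resolvent of the maximal monotone field $X+N_\Omega$ obtained in \cite{LiLopesMartin-Marquez2009}.)

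The step that I expect to require the most care is this last one — transferring the strong monotonicity of $W$ into a usable coercivity bound for $T$ — because the Riemannian monotonicity inequalities and the Hilbert-space inner product in $T_{\bar q}M$ are linked only through parallel transports along varying geodesics; choosing the base point for the reduction to be $P_\Omega(q)$ rather than $q$ itself is precisely what renders the normal-cone cross term harmless.
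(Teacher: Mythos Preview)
Your argument is correct and reaches the same conclusion, but by a different path from the paper. The paper does not absorb $N_\Omega$ into the operator: it sets $Z:=X+\lambda\grad d_q^2$, notes via Proposition~\ref{prop.sm} and \eqref{eq:gd2} that $Z$ is strongly monotone and maximal, and then simply invokes \cite[Proposition~3.5]{LiLopesMartin-Marquez2009} together with \cite[Corollary~3.14]{LiYao2012} to obtain existence and uniqueness of a solution of VIP$(Z,\Omega)$ on the closed convex set $\Omega$. Your route, by contrast, packages $N_\Omega$ into $W$, reads off uniqueness from strong monotonicity, and for existence linearizes at the carefully chosen base point $\bar q=P_\Omega(q)$ to obtain a maximal monotone operator $T$ on $T_{\bar q}M$ (via the manifold--tangent-space correspondence of \cite[Theorem~3.7]{LiLopesMartin-Marquez2009}, exactly as in the proof of Lemma~\ref{le:msvf}), checks coercivity of $T$ by hand using \eqref{eq:coslaw2} and the obtuse-angle property of the metric projection, and concludes surjectivity from the Hilbert-space theory in \cite{BauschkeCombettes2011}. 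What your approach buys is a more self-contained existence argument that does not rely on the Riemannian VIP existence results of \cite{LiYao2012}; what the paper's approach buys is brevity, since those results already encapsulate precisely this step. Your parenthetical alternative---invoking directly the well-definedness of the Riemannian resolvent from \cite{LiLopesMartin-Marquez2009}---is in fact closer in spirit to what the paper actually does.
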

\begin{proof}
Since $X$ is a monotone vector field and $\lambda > 0$,  combining Proposition~\ref{prop.sm}  with \eqref{eq:gd2},  we conclude that the vector field $ Z(p)=X(p)-2\lambda \exp^{-1}_p q$ is a strongly maximal monotone vector field. Therefore, using that $Z$ is maximal  and  taking into account that $M$ is a Hadamard manifold and $ \Omega $  is a nonempty and convex set, we may combine \cite[Proposition 3.5]{LiLopesMartin-Marquez2009} with    \cite[Corollary 3.14]{LiYao2012} to conclude the proof.
\end{proof}
Now we are going to prove the convergence result for the  proximal point method \eqref{eq.pk+1ii}.
\begin{theorem}\label{conv.alg.ii} Assume that {\bf A1}-{\bf A3} hold. Then, the sequence $\{p^k\}$ generated by \eqref{eq.pk+1ii}   is well defined and  converges to a point   $p^*\in S(X,\,\Omega)$.
\end{theorem}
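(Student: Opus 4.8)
The plan is to prove the three facts in sequence: the sequence $\{p^k\}$ is well defined; it is quasi-Fej\'er convergent to $S(X,\Omega)$ while $d(p^k,p^{k+1})\to0$; and every cluster point belongs to $S(X,\Omega)$ — after which Proposition~\ref{fejer} finishes the argument. For \emph{well-definedness}, note that by Proposition~\ref{prop.elem.ii} we have $X\subset X^{\epsilon_k}$, so any solution of the exact subproblem $0\in X(p)-2\lambda_k\exp^{-1}_pp^k+N_\Omega(p)$ also satisfies \eqref{eq.pk+1ii}; since Lemma~\ref{le:esvi} provides such a solution, an admissible $p^{k+1}$ exists at every step, and any such $p^{k+1}$ lies in $\Omega=\operatorname{dom}N_\Omega$ (so, since $\Omega$ is closed, cluster points of $\{p^k\}$ will also lie in $\Omega$).

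For the \emph{quasi-Fej\'er estimate}, the key is to pass to the enlargement of $Y=X+N_\Omega$. Since $N_\Omega$ is monotone (directly from \eqref{eq:nc}, cf.\ Theorem~\ref{mmsub}), Proposition~\ref{prop.elem.X}(ii) together with Proposition~\ref{prop.elem.ii} gives $X^{\epsilon_k}+N_\Omega\subset(X+N_\Omega)^{\epsilon_k}=Y^{\epsilon_k}$, so from \eqref{eq.pk+1ii},
$$2\lambda_k\exp^{-1}_{p^{k+1}}p^k\in Y^{\epsilon_k}(p^{k+1}).$$
Now fix $p^*\in S(X,\Omega)$ (nonempty by {\bf A3}), so $0\in Y(p^*)$. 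Applying the defining inequality \eqref{enl.X} of $Y^{\epsilon_k}(p^{k+1})$ with $q=p^*$ and $v=0\in Y(p^*)$, and using that parallel transport is an isometry together with the identity $P_{p^*p^{k+1}}\exp^{-1}_{p^*}p^{k+1}=-\exp^{-1}_{p^{k+1}}p^*$, one obtains
$$\langle\exp^{-1}_{p^{k+1}}p^k,\exp^{-1}_{p^{k+1}}p^*\rangle\le\frac{\epsilon_k}{2\lambda_k}\le\frac{\epsilon_k}{2\hat{\lambda}}.$$
Substituting this into the law of cosines \eqref{eq:coslaw} with $(p_1,p_3,p_2)=(p^k,p^{k+1},p^*)$ yields
$$d^2(p^{k+1},p^*)\le d^2(p^k,p^*)-d^2(p^k,p^{k+1})+\frac{\epsilon_k}{\hat{\lambda}}.$$
Since $\{\epsilon_k/\hat{\lambda}\}$ is summable, this shows $\{p^k\}$ is quasi-Fej\'er convergent to $S(X,\Omega)$; Proposition~\ref{fejer} then gives that $\{p^k\}$ is bounded, and summing the last inequality over $k$ gives $\sum_k d^2(p^k,p^{k+1})<\infty$, hence $d(p^k,p^{k+1})\to0$.

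For the \emph{cluster-point step}, let $p^{k_j}\to\bar p$; then $\bar p\in\Omega$ because $\Omega$ is closed. Writing the inclusion displayed above at index $k_j-1$ gives $2\lambda_{k_j-1}\exp^{-1}_{p^{k_j}}p^{k_j-1}\in Y^{\epsilon_{k_j-1}}(p^{k_j})$; the left-hand side has norm $2\lambda_{k_j-1}d(p^{k_j},p^{k_j-1})\to0$, so it converges to $0\in T_{\bar p}M$, while $\epsilon_{k_j-1}\to0$ because $\sum_k\epsilon_k<\infty$. Proposition~\ref{prop.conv.alg.}, applied to the monotone field $Y$, then yields $0\in Y^0(\bar p)$. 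By Lemma~\ref{mon.cone}, $Y=X+N_\Omega$ is maximal monotone (using {\bf A1}--{\bf A2}), so $Y^0=Y$ by Proposition~\ref{prop.elem.ii}, and hence $0\in Y(\bar p)$, i.e.\ $\bar p\in S(X,\Omega)$. Since $\bar p$ is an accumulation point of $\{p^k\}$ belonging to the set to which $\{p^k\}$ is quasi-Fej\'er convergent, the second part of Proposition~\ref{fejer} gives $p^k\to\bar p$, completing the proof.

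I expect the main obstacle to be the geometric bookkeeping in the second step: turning the inequality that defines the enlargement $Y^{\epsilon_k}$ — whose natural arguments live in $T_{p^*}M$ — into the bound on $\langle\exp^{-1}_{p^{k+1}}p^k,\exp^{-1}_{p^{k+1}}p^*\rangle$ in $T_{p^{k+1}}M$ required by the law of cosines \eqref{eq:coslaw}. This rests on using the isometry property of $P_{p^*p^{k+1}}$ and the identity $P_{p^*p^{k+1}}\exp^{-1}_{p^*}p^{k+1}=-\exp^{-1}_{p^{k+1}}p^*$ correctly, and recognizing that one should work with the enlargement of the whole field $Y$ rather than carrying $X^{\epsilon_k}$ and $N_\Omega$ separately is what keeps the computation clean. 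A minor technical point is that the subsequential limits in the third step are taken in the varying tangent spaces $T_{p^{k_j}}M$, which is legitimate thanks to the boundedness of $\{p^k\}$ and the hypotheses of Proposition~\ref{prop.conv.alg.}.
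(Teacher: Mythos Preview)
Your proof is correct and follows essentially the same route as the paper: well-definedness via $X\subset X^{\epsilon_k}$ and Lemma~\ref{le:esvi}; the quasi-Fej\'er estimate by passing to the enlargement of $Y=X+N_\Omega$ via Proposition~\ref{prop.elem.X}(ii), applying the defining inequality at a solution $p^*$, and feeding the result into the law of cosines \eqref{eq:coslaw}; and the cluster-point argument via Proposition~\ref{prop.conv.alg.} together with the maximality of $Y$ from Lemma~\ref{mon.cone}. The only cosmetic difference is that the paper works with the subsequence $p^{n_k+1}$ (showing it also converges to $\bar p$) whereas you shift the index backward to $k_j-1$; both are equivalent once $d(p^k,p^{k+1})\to 0$.
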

\begin{proof}
Since $\mbox{dom} X=M$, Proposition~\ref{prop.elem.ii} and item i of Proposition~\ref{prop.boun.boun.} imply that $ X(p)\subseteq X^{\epsilon_k}(p)$ for all $p\in M$ and $k=0,1, \ldots$.  Hence,  for proving the well definition of the sequence $\{p^k\}$ it is sufficient to prove that the inclusion
$$
0\in X(p)-2\lambda_k\exp^{-1}_pp^k+N_{\Omega}(p), \qquad p\in M, 
$$
has solution,   for each $k=0,1, \ldots$, which is a consequence of Lemma~\ref{le:esvi}.

Now, we are going to prove the convergence of $\{p^k\}$ to a point   $p^*\in S(X,\,\Omega)$. Using Proposition~\ref{prop.elem.ii} we conclude that
$N_{\Omega}\subset N_{\Omega}^0$. Thus, from item ii of Proposition~\ref{prop.elem.X} we have  $X^{\epsilon_k}+N_{\Omega} \subset (X+N_{\Omega})^{\epsilon_k}$, for all  $k=0,1, \ldots$. Therefore, using \eqref{eq.pk+1ii} we  obtain
\begin{equation} \label{eq:icte}
2\lambda_k\exp^{-1}_{p^{k+1}}p^k\in (X+N_{\Omega})^{\epsilon_k}(p^{k+1}), \qquad k=0,1, \ldots.
\end{equation}
Since $P_{qp^{k+1}}^{-1} \exp^{-1}_{q}p^{k+1}=-\exp^{-1}_{p^{k+1}}q$ and the parallel transport is a isometry,  last inclusion together  with  Definition~\ref{def.enl.X} yield
$$
-2\lambda_k\left\langle\exp^{-1}_{p^{k+1}}p^k,\exp^{-1}_{p^{k+1}}q\right\rangle+\left\langle v,-\exp^{-1}_qp^{k+1}\right\rangle  \geq  -\epsilon_k,\quad  q\in\Omega, ~  v\in(X+N_{\Omega})(q),  \quad k=0,1, \ldots.
$$
Particularly, if $q\in S(X,\,\Omega)$ then $0\in X+N_{\Omega}(q)$ and last inequality becomes
\begin{eqnarray*}
-2\lambda_k\left\langle\exp^{-1}_{p^{k+1}}p^k,\exp^{-1}_{p^{k+1}}q\right\rangle\geq-\epsilon_k, \quad q\in S(X,\,\Omega),   \qquad k=0,1, \ldots.
\end{eqnarray*}
Using  last inequality and \eqref{eq:coslaw} with $p_1=p^k$, $p_2=q$ and $p_3=p^{k+1}$, after some  algebras   we obtain
\begin{equation}\label{eq.the.ii.X}
-\frac{\epsilon_k}{2\lambda_k}\leq d^2(q,p^k)-d^2(p^k, p^{k+1})-d^2(q,p^{k+1}), \qquad q\in S(X,\,\Omega),   \qquad k=0,1, \ldots.
\end{equation}
Since $0<\hat{\lambda}\leq\lambda_k$, the last inequality gives
\begin{equation} \label{eq.fejer.X}
d^2(q, p^{k+1})\leq d^2(q, p^k)+\frac{\epsilon_k}{\hat{\lambda}}, \qquad q\in S(X,\,\Omega),   \qquad k=0,1, \ldots.
\end{equation}
 Because  $\sum_{k=0}^{\infty}\epsilon_k<\infty$ and $S(X,\,\Omega)\neq \varnothing$, last inequality implies that $\{p^k\}$ is   quasi-Fej\'er convergent to $S(X,\,\Omega)$.  From Proposition~\ref{fejer},  for concluding the proof    is sufficient to prove that there exists an accumulation point $\bar{p}$ of $\{p^k\}$ belonging  to $S(X,\,\Omega)$.  Since  $\{p^k\}$ is   quasi-Fej\'er convergent to $S(X,\,\Omega)$, Proposition~\ref{fejer} implies that  $\{p^k\}$ is bounded.  Take $\bar{p}$ and  $\{p^{n_k}\}$    an accumulation point  and   a subsequence of  $\{p^k\}$, respectively,  such that  $\bar{p}=\lim_{k\rightarrow\infty}p^{n_k}$. On the other hand, since $0<\hat{\lambda}\leq\lambda_k$ and $\sum_{k=0}^{\infty}\epsilon_k<\infty$,  the inequality in \eqref{eq.the.ii.X} implies  that $\lim_{k\to\infty} d(p^k, p^{k+1})=0$. Thus,  $ \lim_{k\to\infty} \exp^{-1}_{p^{n_k+1}}p^{n_k}=0$ and   $\lim_{k\rightarrow\infty}p^{n_k+1}=\bar{p}$. Now, using \eqref{eq:icte} we have
$$
2\lambda_{n_k}\exp^{-1}_{p^{n_k+1}}p^{n_k}\in (X+N_{\Omega})^{\epsilon_{n_k}}(p^{n_k+1}),    \qquad k=0,1, \ldots.
$$
Therefore, letting $k$ goes to $\infty$ in the last inclusion and using  Proposition \ref{prop.conv.alg.}, Lemma \ref{mon.cone}, Proposition \ref{prop.elem.ii} and taking into account that $\{\lambda_k\}$ is bounded we obtain
\begin{eqnarray*} 0\in(X+N_{\Omega})(\bar{p}),
\end{eqnarray*}
which implies that  $\bar{p}\in S(X,\,\Omega)$ and the proof is concluded.
\end{proof}
\section{An Inexact Proximal Point Method for Otimization} \label{sec4}
Throughout this section,  we assume that   $f:M \rightarrow \mathbb{R}$  is a  convex function.  The enlargement of the subdifferential of $f$,   denoted  by  $\partial^{\epsilon}f: M   \rightrightarrows  TM $,   is defined by
$$
    \partial^{\epsilon} f(p):=\left\{ u\in T_pM~:~  \left\langle \mbox{P}_{qp}^{-1} u-v, \,  \exp_{q}^{-1}p\right\rangle \geq -\epsilon, ~ q\in M, ~  v\in \partial f(q) \right\},  \qquad \epsilon\geq 0.
$$
and we denote the {\it $\epsilon$-subdifferential} of $f$  by $\partial_{\epsilon}f : M   \rightrightarrows  TM$, which  is given by
$$
\partial_{\epsilon}f(p):=\left\{u\in T_pM ~ : ~ f(q) \geq f(p)+\langle u,\exp^{-1}_pq\rangle - \epsilon, ~  q\in M\right\}, \qquad \epsilon\geq 0.
$$
\begin{example}\label{ex:xesg}  Let  $\epsilon\geq 0$ and   $\bar{p}\in M$.   Define the closed ball at the origin $0_{T_pM}$ of $T_pM$ and radius $2\sqrt{\epsilon}$ by 
 $$
 B\left[0_{T_pM}, ~2\sqrt{\epsilon}\right]:=\left\{w\in T_pM~:~\parallel w\parallel \leq 2\sqrt{\epsilon}\right\}.
 $$
 Denote the  $\epsilon$-subdifferential  of   $\partial d_{\bar{p}}^2(p)=\{ \grad d_{\bar{p}}^2(p)\}$ by   $\partial_{\epsilon} d_{\bar{p}}^2$.   We claim that the following inclusion holds 
 $$
 \partial d_{\bar{p}}^2(p) + B\left[0_{T_pM}, ~2\sqrt{\epsilon}\right] \subseteq \partial_{\epsilon} d_{\bar{p}}^2(p),  \qquad   p\in M.
 $$
 Indeed,  first note that   from  \eqref{eq:gd2}  we conclude that   $\partial d_{\bar{p}}^2(q)=\{-2\exp^{-1}_q\bar{p}\}$,  for each $ q\in M$.  Due to $ \emph{dom} \partial d_{\bar{p}}^2=M$   definition of $ \partial_{\epsilon} d_{\bar{p}}^2$ implies 
  \begin{equation}\label{ex.1.isg}
  \partial_{\epsilon} d_{\bar{p}}^2(p)=\left\{ u\in T_pM~:~ d^2(\bar{p}, q) \geq d^2(\bar{p}, p)+\langle u,\exp^{-1}_pq\rangle -\epsilon, ~  q\in M \right\}, \quad p\in M.
\end{equation}
We are going to prove the auxiliary result    $\{-2\exp^{-1}_p\bar{p}\}+A(p)\subset  \partial_{\epsilon} d_{\bar{p}}^2(p)$  for each $ p\in M$, where  \begin{equation}\label{eq:apsg}
B(p)=\left\{w\in T_pM ~:~ 0 \geq -d^2(p,q)+ \|w\|d(p,q)- \epsilon, ~ q\in M \right\}, \quad p\in M.
\end{equation}
First of all, note that by  using  \eqref{eq:coslaw},  we obtain the following inequality 
$$
d^2(\bar{p}, q)-d^2(\bar{p}, p)-d^2(p,q)+2\langle \exp^{-1}_p\bar{p},\exp^{-1}_pq\rangle \geq 0,  \quad p, q \in M.
$$
Take  $w\in B(p)$.  Since $\langle w,exp^{-1}_pq\rangle\leq \|w\|d(p,q)$,  for all $w\in B(p)$ and $p, q\in M$, combining   \eqref{eq:apsg} with last inequality yields 
$$
d^2(\bar{p}, q)-d^2(\bar{p}, p)-d^2(p,q)+2\langle \exp^{-1}_p\bar{p},\exp^{-1}_pq\rangle  \geq -d^2(p,q)+\langle w,exp^{-1}_pq\rangle-\epsilon, \quad p, q \in M.
$$
Simple algebraic manipulations  in last inequality shows  that it is equivalent to the following ones
$$
d^2(\bar{p}, q) \geq d^2(\bar{p}, p)+\langle -2\exp^{-1}_p\bar{p}+ w,\exp^{-1}_pq\rangle -\epsilon, \quad p, q \in M, 
$$
which, from \eqref{ex.1.isg},  allows to conclude that $-2\exp^{-1}_p\bar{p} +w \in  \partial_{\epsilon} d_{\bar{p}}^2(p)$, for all $w\in B(p)$ and $p\in M$.  Thus, the auxiliary result is proved. Finally, note that $w\in B(p)$ if,  and only if, there holds $\|w\|^2-4\epsilon <0, $ or equivalently, $\|w\|<2\sqrt{\epsilon}$. Therefore, $B(p)= B\left[0_{T_pM}, ~2\sqrt{\epsilon}\right]$ and,  because  $ \partial d_{\bar{p}}^2(p)+A(p)\subset  \partial_{\epsilon} d_{\bar{p}}^2(p)$  for each $ p\in M$,    the proof of the claim is done.  
\end{example}
\begin{proposition} \label{prop.epsilon}
For each  $p\in M$, there holds $\partial_\epsilon f(p)\subseteq \partial^\epsilon f(p)$.
\end{proposition}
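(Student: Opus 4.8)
The plan is to verify directly that any element of $\partial_\epsilon f(p)$ satisfies the inequality defining $\partial^\epsilon f(p)$. So I would fix $u\in\partial_\epsilon f(p)$ and take an arbitrary $q\in M$ together with an arbitrary $v\in\partial f(q)$; the goal is to prove
$$
\left\langle P_{qp}^{-1}u-v,\,\exp_q^{-1}p\right\rangle\geq -\epsilon,
$$
since this is exactly what membership $u\in\partial^\epsilon f(p)$ requires.

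The first step is to produce a bound on $\langle u,\exp_p^{-1}q\rangle+\langle v,\exp_q^{-1}p\rangle$ by adding two subgradient-type inequalities. From $u\in\partial_\epsilon f(p)$ and the definition of the $\epsilon$-subdifferential, $f(q)\geq f(p)+\langle u,\exp_p^{-1}q\rangle-\epsilon$. From $v\in\partial f(q)$ and the definition of the subdifferential in Section~\ref{sec:aux}, $f(p)\geq f(q)+\langle v,\exp_q^{-1}p\rangle$. Adding these two inequalities and cancelling $f(p)+f(q)$ yields
$$
\langle u,\exp_p^{-1}q\rangle+\langle v,\exp_q^{-1}p\rangle\leq\epsilon.
$$

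The second step is to rewrite the target inner product geometrically. Using that the parallel transport is an isometry together with the identity $P_{qp}^{-1}\exp_q^{-1}p=-\exp_p^{-1}q$ (the same one invoked in the proof of Theorem~\ref{conv.alg.ii}), one gets $\langle P_{qp}^{-1}u,\exp_q^{-1}p\rangle=-\langle u,\exp_p^{-1}q\rangle$, so that
$$
\left\langle P_{qp}^{-1}u-v,\,\exp_q^{-1}p\right\rangle=-\langle u,\exp_p^{-1}q\rangle-\langle v,\exp_q^{-1}p\rangle\geq-\epsilon,
$$
where the last inequality is precisely the bound from the first step. Since $q\in M$ and $v\in\partial f(q)$ were arbitrary, the definition of $\partial^\epsilon f$ gives $u\in\partial^\epsilon f(p)$, which completes the argument.

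There is essentially no obstacle here: the proof amounts to adding two inequalities, and the only point demanding care is the sign and parallel-transport bookkeeping in the second step, which is routine and already used elsewhere in the paper. Incidentally, taking $\epsilon=0$ the same argument re-derives $\partial f\subseteq\partial^0 f$, in agreement with Proposition~\ref{prop.elem.ii}.
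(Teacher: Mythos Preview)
Your proof is correct and follows essentially the same route as the paper's: take $u\in\partial_\epsilon f(p)$, $q\in M$, $v\in\partial f(q)$, add the two subgradient-type inequalities to obtain $\langle u,\exp_p^{-1}q\rangle+\langle v,\exp_q^{-1}p\rangle\leq\epsilon$, and then use the isometry of parallel transport to rewrite this as the defining inequality of $\partial^\epsilon f(p)$. The only differences are cosmetic, in the order and presentation of the steps.
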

\begin{proof}
Take $u\in\partial_\epsilon f(p)$,  $q\in M$ and  $v\in \partial f(q)$.  From  the definitions  of $\partial f(q)$ and $\partial_\epsilon f (p)$ we have
$$
f(p) \geq f(q)+\langle v,\exp^{-1}_qp\rangle, \qquad  \qquad f(q) \geq f(p)+\langle u,\exp^{-1}_pq\rangle - \epsilon,
$$
respectively. Combining two last inequalities  we conclude that  $ 0\geq  \langle v, \exp^{-1}_qp\rangle+ \langle u,\exp^{-1}_pq\rangle +\epsilon. $
Since  the parallel transport is an isometry and $\mbox{P}_{qp}^{-1} \exp^{-1}_pq = -\exp^{-1}_qp $, last inequality becomes
$$
0\geq  \langle v, \, \exp^{-1}_qp\rangle+ \langle \mbox{P}_{qp}^{-1}u, \, -\exp^{-1}_qp\rangle -\epsilon.
$$
Thus, using  last inequality and  definition of $ \partial^{\epsilon} f(p)$ we obtain  that $u\in \partial^\epsilon f(p)$. Therefore,  the prove is done.
\end{proof}
\begin{remark}
Note that if $M$ has zero curvature then the inequality \eqref{eq:coslaw} holds as a equality. Therefore,  
 in Example~\ref{ex:xesg}, we can prove that  the equality holds as equality, namely, 
$$
 \partial d_{\bar{p}}^2(p) + B\left[0_{T_pM}, ~2\sqrt{\epsilon}\right] = \partial_{\epsilon} d_{\bar{p}}^2(p),  \qquad   p\in M. 
 $$
 Moreover, we can also prove that the inclusion  $ \partial_{\epsilon} d_{\bar{p}}^2(p)\subset  \partial^{\epsilon} d_{\bar{p}}^2(p)$ is strict,   for all   $p\in M$, see Example~\ref{ex:xe}.
\end{remark}
Let $\Omega \subset M$. The {\it constrained optimization problem}  consists in
\begin{equation}  \label{eq.cop}
 \min  ~ f(p) , \qquad  \const  ~ p\in\Omega.
\end{equation}
Letting $\delta_\Omega$ be the indicate function,  defined by $\delta_{\Omega}(p)=0$, if $p\in\Omega$ and $\delta_{\Omega}(p)=+\infty$ otherwise,   Problem~\ref{eq.cop} is equivalent to
\begin{eqnarray*}
 \min  ~ (f+\delta_{\Omega})(p), \qquad  \const  ~ p\in M.
\end{eqnarray*}
From now on,   $\Omega \subset M$  is  a closed and convex set  and $S(f,\Omega)$ denotes  the solution  set of  Problem~\ref{eq.cop}.
\begin{theorem} \label{th:copoc}
There holds   $ \partial (f+\delta_{\Omega})(p)=\partial f(p)+N_{\Omega}(p)$, for each $p\in\Omega.$  Moreover,   $p^*\in S(f,\Omega)$ if,  and  only if,    $0\in \partial f(p^*)+N_{\Omega}(p^*)$.
\end{theorem}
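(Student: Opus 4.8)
The plan is to prove the set equality by two inclusions and then read off the optimality condition from Fermat's rule together with the first part.

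\emph{The inclusion $\partial f(p)+N_\Omega(p)\subseteq\partial(f+\delta_\Omega)(p)$.} This one I would verify straight from the definitions. Fix $p\in\Omega$, $s_1\in\partial f(p)$ and $s_2\in N_\Omega(p)$. For $q\in\Omega$ the subgradient inequality for $f$ gives $f(q)\geq f(p)+\langle s_1,\exp_p^{-1}q\rangle$, while \eqref{eq:nc} gives $\langle s_2,\exp_p^{-1}q\rangle\leq 0$; adding these and using that $f+\delta_\Omega$ agrees with $f$ on $\Omega$ yields $(f+\delta_\Omega)(q)\geq(f+\delta_\Omega)(p)+\langle s_1+s_2,\exp_p^{-1}q\rangle$, and for $q\notin\Omega$ the same inequality is trivial because the left-hand side is $+\infty$. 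Hence $s_1+s_2\in\partial(f+\delta_\Omega)(p)$.

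\emph{The reverse inclusion.} Here I would use the maximal-monotonicity machinery of Section~\ref{sec2} rather than a Moreau--Rockafellar separation argument. Since $\Omega$ is nonempty, closed and convex and $f$ is real-valued and convex, $f+\delta_\Omega$ is proper, lower semicontinuous and convex, so $\partial(f+\delta_\Omega)$ is monotone by Theorem~\ref{mmsub}, and clearly $\dom\partial(f+\delta_\Omega)\subseteq\dom(f+\delta_\Omega)=\Omega$. On the other hand $\dom f=M$, so $\partial f$ is maximal monotone by Theorem~\ref{mmsub}, and then $\partial f+N_\Omega$ is maximal monotone by Lemma~\ref{mon.cone}, with $\dom(\partial f+N_\Omega)=\Omega$. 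The first inclusion says $\partial f+N_\Omega\subseteq\partial(f+\delta_\Omega)$. Now fix $p\in\Omega$ and $u\in\partial(f+\delta_\Omega)(p)$; for every $q\in\Omega$ and every $v\in(\partial f+N_\Omega)(q)\subseteq\partial(f+\delta_\Omega)(q)$, monotonicity of $\partial(f+\delta_\Omega)$ gives $\langle P_{qp}^{-1}u-v,\exp_q^{-1}p\rangle\geq 0$. Since $p\in\Omega=\dom(\partial f+N_\Omega)$, maximality of $\partial f+N_\Omega$ in the sense of \eqref{eq2.3} forces $u\in(\partial f+N_\Omega)(p)=\partial f(p)+N_\Omega(p)$. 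Together with the first inclusion this gives the equality for every $p\in\Omega$ (at points where $\partial(f+\delta_\Omega)(p)=\varnothing$ the first inclusion already makes $\partial f(p)+N_\Omega(p)$ empty as well).

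\emph{The optimality condition.} Finally, $p^*\in S(f,\Omega)$ means exactly that $p^*$ is a global minimizer of $f+\delta_\Omega$ on $M$ --- in particular $p^*\in\Omega$, since otherwise $(f+\delta_\Omega)(p^*)=+\infty$ could not be the minimal value --- and directly from the definition of subgradient this is equivalent to $0\in\partial(f+\delta_\Omega)(p^*)$. By the set equality already proved this is the same as $0\in\partial f(p^*)+N_\Omega(p^*)$, which finishes the proof. The only real obstacle is the reverse inclusion; it is dispatched by maximality, so the main point to be careful about is the bookkeeping of domains, namely $\dom\partial(f+\delta_\Omega)\subseteq\Omega=\dom(\partial f+N_\Omega)$, which is what makes the pointwise maximality criterion \eqref{eq2.3} applicable at precisely the points we need.
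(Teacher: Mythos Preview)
Your proof is correct, but it takes a genuinely different route from the paper's. The paper disposes of the sum rule $\partial(f+\delta_\Omega)(p)=\partial f(p)+N_\Omega(p)$ by simply citing \cite[Proposition~5.4]{LiLopesMartin-Marquez2009}, and then derives the optimality condition exactly as you do. You, instead, give a self-contained argument for the hard inclusion via the maximal-monotonicity machinery of Section~\ref{sec2}: the easy inclusion plus Lemma~\ref{mon.cone} (applied to $X=\partial f$) make $\partial f+N_\Omega$ a maximal monotone extension contained in the monotone operator $\partial(f+\delta_\Omega)$, and then the domain bookkeeping $\dom\partial(f+\delta_\Omega)\subseteq\Omega=\dom(\partial f+N_\Omega)$ lets you invoke \eqref{eq2.3} pointwise to force equality. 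This is a clean and legitimate alternative; its advantage is that it stays entirely within the paper's own framework and does not rely on an external Moreau--Rockafellar-type result, while the paper's citation buys brevity. There is no circularity, since the proof of Lemma~\ref{mon.cone} does not use the sum rule.
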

\begin{proof}  The first part was proved in \cite[Proposition 5.4]{LiLopesMartin-Marquez2009}.  To prove the second part, first  use convexity of  $\Omega$ and $f$  for concluding  that  $f+\delta_{\Omega}$  is also convex,  and then use  the first part to obtain the result.
\end{proof}

Take   $0<\hat{\lambda}\leq\tilde{\lambda}$,  a sequence $\{\lambda_k\}\subset\mathbb{R}$ such that $\hat{\lambda}\leq\lambda_k\leq\tilde{\lambda}$ and a sequence $\{\epsilon_k\}\subset\mathbb{\mathbb{R}_{++}}$ such that $\sum_{k=0}^{\infty}\epsilon_k<\infty$. The {\it inexact proximal point method  for the constrained optimization problem} in \eqref{eq.cop}  is defined as follows: \\
\emph{Inicialization:}
\begin{equation} \label{eq.ia.iif}
p^0\in\Omega.
\end{equation}
\emph{Iterative Step:} Given $p^k$, define $X_k:M\rightrightarrows TM$ as
\begin{equation} \label{eq.pia.iif}
X_k(p):=(\partial^{\epsilon_k}f+N_{\Omega})(p)-2\lambda_k\exp^{-1}_px^k,
\end{equation}
and take $p^{k+1}$ such that
\begin{equation} \label{eq.pk+1iif}
0\in X_k(p^{k+1}).
\end{equation}
\begin{remark}  For  $\epsilon_k=0$ the above method  generalizes  the method (5.15) of  Chong Li et. al. \cite{LiLopesMartin-Marquez2009} and,   for $\epsilon_k=0$ and  $\Omega=M$ we obtain the method proposed by  Ferreira and Oliveira \cite{FerreiraOliveira2002}.
\end{remark}
\begin{theorem} \label{conv.alg.}
Assume that $S(f,\,\Omega)\neq \varnothing$. Then, the sequence $\{p^k\}$ generated by \eqref{eq.ia.iif}-\eqref{eq.pk+1iif}   is well defined and  converges to a point   $p^*\in S(f,\,\Omega)$.
\end{theorem}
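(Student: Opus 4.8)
The plan is to reduce Theorem~\ref{conv.alg.} to the variational-inequality convergence result, Theorem~\ref{conv.alg.ii}, by specializing the latter to the vector field $X=\partial f$.

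First I would check that Assumptions {\bf A1}--{\bf A3} of Section~\ref{sec3} hold for the pair $(\partial f,\Omega)$. Since $f\colon M\to\mathbb{R}$ is finite-valued and convex on the Hadamard manifold $M$, we have $\dom f=M$ and, consequently, $\dom\partial f=M$; Theorem~\ref{mmsub} then yields that $\partial f$ is maximal monotone. Together with the standing hypothesis that $\Omega$ is closed and convex, this gives {\bf A1} and {\bf A2} with $Y=\partial f+N_\Omega$. For {\bf A3}, I would invoke the second part of Theorem~\ref{th:copoc}, which says that $p^*\in S(f,\Omega)$ if and only if $0\in\partial f(p^*)+N_\Omega(p^*)$; hence $S(\partial f,\Omega)=S(f,\Omega)$, and the latter is nonempty by assumption.

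Next I would identify the iteration \eqref{eq.pia.iif}--\eqref{eq.pk+1iif} with the proximal iteration \eqref{eq.pk+1ii} applied to $X=\partial f$. Comparing the definition of $\partial^{\epsilon}f$ stated at the beginning of Section~\ref{sec4} with Definition~\ref{def.enl.X}, and using $\dom\partial f=M$, one sees that $\partial^{\epsilon_k}f=(\partial f)^{\epsilon_k}$ for every $k$. Substituting this and the definition of $X_k$ into \eqref{eq.pk+1iif} gives
$$
0\in\big((\partial f)^{\epsilon_k}+N_\Omega\big)(p^{k+1})-2\lambda_k\exp^{-1}_{p^{k+1}}p^k,\qquad k=0,1,\dots,
$$
which is precisely \eqref{eq.pk+1ii} with $X=\partial f$. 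Since the hypotheses on $\{\lambda_k\}$ (with $0<\hat\lambda\le\lambda_k\le\tilde\lambda$) and on $\{\epsilon_k\}$ (with $\sum_k\epsilon_k<\infty$) are the same as those required in Section~\ref{sec3}, Theorem~\ref{conv.alg.ii} applies verbatim: the sequence $\{p^k\}$ is well defined and converges to a point $p^*\in S(\partial f,\Omega)=S(f,\Omega)$, which is the assertion.

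The only step that needs a modicum of care — and hence the main (minor) obstacle — is the justification that $\dom\partial f=M$, since this is what makes Theorem~\ref{mmsub} applicable with full domain and makes the two enlargements $\partial^{\epsilon_k}f$ and $(\partial f)^{\epsilon_k}$ literally coincide; here one uses that a finite convex function on a Hadamard manifold is locally Lipschitz and therefore has a nonempty subdifferential at every point. Once this is in place, the proof is a direct application of the machinery assembled in Sections~\ref{sec2} and~\ref{sec3}.
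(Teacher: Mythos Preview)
Your proposal is correct and follows essentially the same approach as the paper: specialize Theorem~\ref{conv.alg.ii} to $X=\partial f$, using Theorem~\ref{mmsub} for maximal monotonicity (with $\dom f=M$) and Theorem~\ref{th:copoc} to identify $S(\partial f,\Omega)$ with $S(f,\Omega)$. The paper's own proof is a two-line version of exactly this reduction; your write-up simply makes the verification of {\bf A1}--{\bf A3} and the identification $\partial^{\epsilon_k}f=(\partial f)^{\epsilon_k}$ explicit.
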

\begin{proof}    Since $\mbox{dom}f=M$,   Theorem~\ref{mmsub} implies that  $\partial f$ is maximal monotone. Therefore, taking into account  that  $N_{\Omega}= \partial \delta_{\Omega}$,  the result follows  directly from Theorem ~\ref{conv.alg.ii} with $X=\partial f$.
\end{proof}
\section{Final Remarks} \label{secfr}
In this paper we study some basics properties of enlargement of monotone vector fields. Since this concept has been successfully employed  for wide range of purpose, in linear setting,  we expect that the results of this paper become a first step towards a more general theory in the Riemannian context, including other algorithms for solving variational inequalities. We foresee further progress in this topic in the nearby future.
\section*{Acknowledgements}
The work  was supported  by CNPq Grants   458479/2014-4,  471815/2012-8,  303732/2011-3, 312077/2014-9,  305158/2014-7,  CAPES-MES-CUBA 226/2012 and  FAPEG  201210267000909 - 05/2012.

\end{document}